\newcommand{\Z}{\mathbb{Z}}
\newcommand{\R}{\mathbb{R}}
\newcommand{\C}{\mathbb{C}}
\newcommand{\CP}{\mathbb{C}P}
\newcommand{\RP}{\mathbb{R}P}
\newcommand{\T}{\mathbb{T}}
\newcommand{\D}{\mathcal{D}}
\newcommand{\F}{\mathcal{F}}
\newcommand{\Ve}{\text{Vert}}
\newcommand{\Ed}{\text{Edge}}
\DeclareMathOperator{\codim}{codim}
\theoremstyle{plain}
\newtheorem{thm}{Theorem}[section]
\newtheorem{lem}[thm]{Lemma}
\newtheorem{prop}[thm]{Proposition}
\newtheorem{coro}[thm]{Corollary}
\newtheorem{defn}[thm]{Definition}
\theoremstyle{remark}
\newtheorem*{rem}{Remark}
\theoremstyle{definition}
\newtheorem{exa}{Example}
\title{Enumeration of Real Conics and Maximal Configurations}
\date{\today}
\author{Erwan Brugallé}
\address{Université Pierre et Marie Curie,  Paris 6, 175 rue du Chevaleret, 75
013 Paris, France}
\email{brugalle@math.jussieu.fr}
\author{Nicolas Puignau}
\address{Universidade Federal do Rio de Janeiro, Ilha do Fundão, 21941-909 Rio de Janeiro, Brasil}
\email{puignau@im.ufrj.br}
\subjclass[2010]{Primary 14N10, 14P05; Secondary 14N35, 14N05}
\keywords{Tropical geometry, floor decomposition, real enumerative geometry, 
Gromov-Witten invariants}
\begin{document}

\begin{abstract}
We use floor decompositions of tropical curves to prove that any
enumerative problem concerning conics passing through
projective-linear subspaces in $\RP^n$ is maximal.
 That is, there exist generic configurations of real linear spaces such
 that 
all complex conics passing through these constraints are actually real.
\end{abstract}

\maketitle

\section{Introduction}\label{intro}
A rational curve of degree $d$ in $\C P^n$ is parameterized by a
polynomial map 
$$\begin{array}{cccc}
\phi :& \C P^1& \longrightarrow & \C P^n
\\ & [t,u]&\longmapsto & [P_0(t,u):\ldots :P_n(t,u)]
\end{array} $$
where the $P_i(t,u)$'s are homogeneous polynomials in two variables of
degree $d$
with no common factors. Since $Aut(\C P^1)$ has dimension 3 and 
all the  $P_i(t,u)$'s  are defined up to a common multiplicative
constant, the dimension of the space of
rational curves of degree $d$ in $\C P^n$ is
$$(n+1)(d+1) - 4 = (n+1)d + (n-3). $$

Consequently, if we are looking for rational curves in $\C P^n$ satisfying
exactly this number of independent conditions, we can reasonably 
expect the number of
solution to be finite. For example, if $L$ is a linear subspace of
codimension $j\ge 1$ in $\C
P^n$, the condition ``to intersect $L$'' imposes exactly $j-1$ independent
conditions on rational curves in $\C P^n$. 
Hence, if we choose a generic configuration 
$\omega=\{L_1,\ldots,L_\gamma\}$
of linear subspaces of 
$\C P^n$, with $l_j=\codim L_j\ge 1$, such that 
\begin{equation}\label{RR}\sum_{j=1}^{\gamma}(l_j-1)=(n+1)d
  +(n-3)\end{equation}
we expect a finite number of rational curves of degree $d$ in $\C P^n$
 intersecting all the linear subspaces in $\omega$. The term ``generic''
means that these linear subspaces have to be chosen so that they impose
altogether independent conditions.

It turns out that this number of rational curves, that we denote by
$N_{d,n}(l_1,\dots,l_{\gamma})$, is indeed finite and 
 doesn't depend on the configuration $\omega$ we
have chosen, but only on $n$, $d$, $l_1$,$\ldots$, $l_{\gamma}$. 
The numbers $N_{d,n}(l_1,\dots,l_{\gamma})$ are known as
\textit{Gromov-Witten invariants} of the projective space $\C
P^n$. 
For example, since there exists a unique line passing through two
distinct points in $\C P^n$, we have 
$$N_{1,n}(n,n)=1 \quad \forall n\ge 2. $$

For a more detailed introduction to Gromov-Witten theory, 
we refer the interested
reader to the excellent book \cite{Vain2}.
In this paper, it is convenient to
 extend the definition of the numbers
$N_{d,n}(l_1,\dots,l_{\gamma})$ to any set of $\gamma$ numbers in $\Z$
by 
$$N_{d,n}(l_1,\dots,l_{\gamma})=0\quad \text{if}\quad
\sum_{j=1}^{\gamma}(l_j-1)\ne(n+1)d 
  +(n-3) \quad \text{or}\quad \exists j, \ l_j\le 0\text{ or } l_j\ge n+1.$$

All linear spaces in our generic configuration $\omega$ can be chosen to be
real. In this case, it makes sense to enumerate real rational
curves in $\C P^n$ (i.e. rational curves which are invariant under the
complex conjugation of $\C P^n$) of degree $d$ intersecting our
configuration of real
linear subspaces. Unlike in the enumeration of complex curves, the
number of real solutions, denoted by $N^\R_{d,n}(l_1,\dots,l_{\gamma},\omega)$,
 now depends on the chosen configuration
$\omega$ of
linear spaces. 
Clearly, we have the inequality
$$N^\R_{d,n}(l_1,\dots,l_{\gamma},\omega)\le N_{d,n}(l_1,\dots,l_{\gamma})
  \quad \forall \omega.$$
However, it is unknown in general if there exists a real 
configuration $\omega$ such that all complex solutions are real.
For example, 
can the $92$ complex conics
passing through $8$ general lines in $\RP^3$  be real?
More generally,
it is an important and difficult
 question to ask how many solutions of an enumerative
problem can be real (see \cite[§7.2]{F}). When all complex solutions
can be real,
 we say that this enumerative problem is \textit{maximal}.

To stress how difficult these questions are, 
let us summarize the 
very few things  known in 2011 about the maximality of the enumerative
problems defined above. Since the corresponding Gromov-Witten
invariant is equal to 1, it is trivial that the problem is maximal in
the two following cases:
\begin{itemize}
\item $d=1$,  $l_j=n$ for some $j$;
\item $n=2$, $d=2$, and $l_1=l_2=l_3=l_4=l_5=2$.
\end{itemize}
It is also easy to see that the problem is maximal in the case $n=2$
and $d=3$ (and so $l_1=\ldots=l_8=2$). The first
systematic non-trivial result was obtained by Sottile who proved in
\cite{Sot3} that the problem is maximal as soon as $d=1$ (the so-called 
problems of "Schubert-type"). Recently, with the help of tropical
geometry, it was proved in \cite{Br7} that the problem is maximal for
$d=2$ and $n=3$. Up to our knowledge, nothing more was known before
our investigation.

\vspace{2ex}
Our main result is that the enumerative problems discussed above are
maximal when $d=2$. 
Theorem \ref{main} is a direct consequence of 
 Theorem \ref{corr}, Lemma \ref{real mod}, and Proposition \ref{conics}.
\begin{thm}\label{main}
For $n\ge 2$, $l_1\ge 1$, $\ldots$, $l_{\gamma}\ge 1$ satisfying
$$\sum_1^{\gamma} (l_j-1)=3n -1,$$
there exists a generic  configuration $\omega=\{L_1,\ldots,L_\gamma\}$ 
of real linear
subspaces of $\C P^n$ such that $\codim L_j=l_j$ and
$$N^\R_{2,n}(l_1,\dots,l_{\gamma},\omega)= N_{2,n}(l_1,\dots,l_{\gamma}).$$
\end{thm}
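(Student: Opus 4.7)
The plan is to translate the enumerative problem into tropical geometry and then exploit floor decompositions, following the overall strategy that worked in \cite{Br7} for the case $n=3$. The statement asserts that Theorem \ref{main} is an immediate combination of three ingredients: a correspondence theorem (Theorem \ref{corr}), a comparison between complex and real multiplicities (Lemma \ref{real mod}), and a combinatorial statement specific to conics (Proposition \ref{conics}). So our task here is simply to explain how these three ingredients fit together.

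First, I would choose a configuration $\omega$ of real tropical linear subspaces of $\T P^n$, with prescribed codimensions $l_1,\ldots,l_\gamma$, that is sufficiently stretched in a distinguished direction to make every rational tropical conic passing through $\omega$ decomposable into floors. The existence of such a ``vertically stretched'' generic configuration is standard for floor decomposition arguments. By the correspondence theorem (Theorem \ref{corr}), the count $N_{2,n}(l_1,\ldots,l_\gamma)$ is obtained as the weighted sum, over tropical conics $C$ through $\omega$, of a complex multiplicity $\mu^\C(C)$; and the real count $N^\R_{2,n}(l_1,\ldots,l_\gamma,\omega_\R)$, for a real lift $\omega_\R$ of $\omega$, is similarly computed by a real multiplicity $\mu^\R(C)$.

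Second, I would apply Proposition \ref{conics} to get the explicit list of floor diagrams of tropical conics through $\omega$ together with their complex multiplicities. Since $d=2$, a rational tropical conic has at most two floors, so the combinatorial possibilities are rather restricted: either a single floor of degree $2$, or two floors of degree $1$ joined by elevators. For each such diagram one reads off the complex multiplicity as a product of local contributions at vertices and elevators.

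Third, for each floor diagram appearing in Proposition \ref{conics}, I would invoke Lemma \ref{real mod} to show that one can choose the real structure on $\omega_\R$ so that $\mu^\R(C)=\mu^\C(C)$ for every tropical conic $C$ through $\omega$. The lemma presumably takes the form that $\mu^\R(C)$ equals $\mu^\C(C)$ provided certain real solvability conditions at the floors and elevators are satisfied, and that these conditions can always be arranged for configurations of linear subspaces (as opposed, say, to configurations of points, where they may fail). Summing over all diagrams then yields $N^\R_{2,n}(l_1,\ldots,l_\gamma,\omega_\R)=N_{2,n}(l_1,\ldots,l_\gamma)$, which is the desired equality.

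The main obstacle I expect is the third step: checking that the real solvability conditions of Lemma \ref{real mod} can simultaneously be enforced at every floor and every elevator of every tropical conic counted by Proposition \ref{conics}. The degree-one floors are easy (a real line through a real Schubert condition always exists), but the degree-two floors and the positioning of the elevators relative to the codimension constraints are delicate, and this is where the specific combinatorics provided by Proposition \ref{conics} must be used in a crucial way.
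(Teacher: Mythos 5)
Your overall frame (tropical correspondence plus floor decomposition) is the right one, but the mechanism you propose for extracting realness is not the one the paper uses, and, as you yourself observe, it leaves the hardest step unresolved. In the paper, Lemma \ref{real mod} is not a statement that a real multiplicity $\mu^\R(C)$ equals $\mu^\C(C)$ under ``real solvability conditions''; it is a parity statement: since the degenerating diffeomorphism $H_t$ commutes with complex conjugation, the two members of a conjugate pair of complex conics have the same tropical limit, so each tropical solution $f$ contributes at least $[\mu(f)]_2$ real conics, and a suitable real configuration realizes at least $\sum_f[\mu(f)]_2$ real solutions. Likewise, Proposition \ref{conics} is not a list of floor diagrams with their multiplicities; it is the statement that a well-ordered $(2,n)$-totally decomposing configuration $\omega$ is \emph{maximal}, i.e.\ the set $\T\mathcal C(\omega)$ consists of exactly $N_{2,n}(l_1,\dots,l_\gamma)$ distinct tropical conics. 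Combined with Theorem \ref{corr}, which writes $N_{2,n}(l_1,\dots,l_\gamma)$ as $\sum_f\mu(f)$ with each $\mu(f)$ a positive integer, this forces $\mu(f)=1$ for every $f$. Then $\sum_f[\mu(f)]_2=|\T\mathcal C(\omega)|=N_{2,n}(l_1,\dots,l_\gamma)$, Lemma \ref{real mod} produces a real configuration with at least that many real conics, and the reverse inequality is trivial.

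The gap in your plan is therefore concrete: you would need to define a real multiplicity and verify realizability conditions at every floor and elevator of every diagram, which you flag as the delicate point but do not carry out, and which in fact is never needed. The whole purpose of introducing \emph{well-ordered} totally decomposing configurations in Section \ref{section: proof} is to make the number of distinct tropical solutions equal the Gromov--Witten invariant, so that every multiplicity is $1$ and realness comes for free from the parity argument. Without that counting statement (which is where the real work of the paper lies, via Propositions \ref{floor d=2 1}, \ref{floor d=2 2} and \ref{card red}), your third step has no way to get off the ground.
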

To prove Theorem \ref{main}, we use \textit{floor decomposition of
  tropical curves}. In his pioneer work \cite{Mik1}, Mikhalkin reduced
the enumeration of complex and real algebraic curves in $(\C^*)^2$ to
the enumeration of some piecewise linear graphs in $\R^2$ called 
plane tropical curves. Shortly after 
 these results
were extended in \cite{Mik08} and \cite{NS}
to the computation of genus 0 Gromov-Witten
invariants of projective spaces of arbitrary dimension. 
By stretching configurations of constraints along  some specific
direction,   Brugallé and Mikhalkin replaced
in \cite{Br6}
 the enumeration of tropical curves by a purely combinatorial study
 of their floor decompositions. As an application, they
 exhibited a generic configuration of 8 real lines in $\R P^3$
 with 92 real conics passing through them.

In this paper, we refine the technique used in
\cite{Br7} in the case of $\C P^3$
 to systematically study the case $d=2$. Along the way, we
will give a proof of Sottile's Theorem  different
from the original one.

\vspace{1ex}
The question of existence of non-trivial lower bounds for the numbers
$N^\R_{2,n}(l_1,\dots,l_{\gamma},\omega)$ is also a very important and
difficult problem about which not so much is known. The combination of
Welschinger invariants (see \cite{Wel1} and \cite{Wel2}) 
and tropical geometry allowed to exhibit such
non-trivial lower bounds in the case of rational curves passing
through points in $\R P^2$ or $\R P^3$, i.e. for $n=2$ or $3$ and $l_i=n$
$\forall i$ (see \cite{Wel1}, \cite{Mik1}, and \cite{IKS2} for the case
$n=2$, and \cite{Wel2}, and \cite{Br6} for the case $n=3$). 
In the case of enumeration of lines (and more generally in the
enumeration of real linear spaces), the existence of some non-trivial lower
bounds has been proved by Gabrielov and Eremenko in \cite{Gabi1}.
Up to our knowledge, the exact determination of the minimal value of 
$N^\R_{d,n}(l_1,\dots,l_{\gamma},\omega)$ (when non-trivial) is known
so far
only in the cases $n=2$, $d=3$ (\cite{DK}) and $d=4$ (\cite{Rey}), and
in the cases $d=1$ and $l_i=2$ for all $i$ (\cite{Gabi1}).

\vspace{1ex}
One could also study maximality of more general real enumerative
problems, for example by prescribing tangency conditions with constraints. 
We refer the interested reader to 
\cite{RoToVu}, \cite{Ber3}, \cite{Sot2}, and \cite{Br9}
for some partial answers in this direction.

\vspace{1ex}
We give in section \ref{section: geo trop} all tropical definitions
needed to prove Theorem \ref{main}. 
In particular, we set-up tropical enumerative
problems studied in this paper. Next, we explain 
in section \ref{section: floor dec}
 the main ideas of the
floor decomposition technique to solve these tropical enumerative
problems, before focusing
 on the easier particular cases of enumeration of lines and conics. 
Theorem \ref{main} is finally proved in section
\ref{section: proof}.

\vspace{2ex}
\textbf{Acknowledgment: } 
This work has been supported by the \textit{Réseau de Coopération France-Brésil}. E.B. is also
partially supported by the ANR-09-BLAN-0039-01.

\section{Tropical geometry}\label{section: geo trop}
In this section, we briefly review the tropical background needed in
this paper. For more details, we refer, for example, to 
\cite{Mik3}, \cite{St2}, \cite{Mik9}, and \cite{BIT}. 
\subsection{Rational tropical curves}
Given a finite graph $C$ (i.e. $C$ has a finite number of edges and
vertices) we denote 
by $\Ve^\infty(C)$ (resp. $\Ve^0(C)$) the set of its vertices 
 which are (resp. are not) $1$-valent, 
 and by 
$\Ed^\infty(C)$ (resp. $\Ed^0(C)$) the set of its edges  
 which are (resp. are not) adjacent to a $1$-valent vertex. 
Throughout the text, we will always assume that
the considered graphs \textbf{do not have any 2-valent vertices}.

\begin{defn}
A rational tropical curve $C$ is a  
 finite compact connected tree
equipped with a complete inner metric on
$C\setminus\Ve^\infty(C)$.
\end{defn}
By definition,  the 1-valent vertices of $C$ are 
at  infinite distance from all the other
points of $C$. 
Elements of $\Ed^\infty(C)$ are called the \textit{ends} of $C$. 
An edge in $\Ed^\infty(C)$ (resp. $\Ed^0(C)$) is said to be
\textit{unbounded} (resp. \textit{bounded}).

\begin{exa}
An example of rational tropical curve $C$ is depicted in Figure
\ref{exa1}. This curve has 5 unbounded edges, and 2 bounded
edges of finite
length
$a$ and $b$. The 
length of an edge of $C$ is written close to this edge in Figure \ref{exa1}.
\begin{figure}[h]
\input{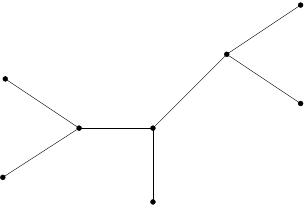_t}
\caption{A rational tropical curve}\label{exa1}
\end{figure}
\end{exa}

Given $e$  an edge of a tropical curve $C$, we choose a point $p$ in
the interior of $e$ and
a unit vector $u_e$
of the tangent line to $C$ at $p$. Of course,  the vector $u_e$
depends on the choice of $p$ and is well-defined only up to
multiplication by $-1$, but this will not
matter in the following. We will sometimes need $u_e$ to have a
prescribed direction, and we will then precise this direction.
The
standard inclusion of $\Z^n$ in $\R^n$ induces a standard
inclusion of $\Z^n$ in the tangent space of $\R^n$  at any point of $\R^n$.

\begin{defn}
Let $C$ be a  rational tropical curve.
A
 continuous map  $f : C\setminus\Ve^\infty(C)\to \R^n$ is a
  tropical morphism if
\begin{itemize}
\item for any edge $e$ of $C$, the restriction $f_{|e}$ is a
  smooth map with $df(u_e)=w_{f,e}u_{f,e}$ where
 $u_{f,e}\in\Z^n$ is a
  primitive vector, and $w_{f,e}$ is a non-negative integer;

\item for any vertex $v$ in $\Ve^0(C)$ whose adjacent  edges are
  $e_1,\ldots,e_k$,
  one has the balancing condition
$$\sum_{i=1}^k  w_{f,e_i}u_{f,e_i}=0 $$
where $u_{e_i}$ is chosen so that it points away from $v$.
\end{itemize}
\end{defn}

The integer $w_{f,e}$
 is
called the \textit{weight of the edge $e$ with respect to $f$}.  When
no confusion is possible, we will  speak 
about the weight of an edge, without referring to the morphism $f$.
By abuse, we will
 denote $f:C\to\R^n$ instead of $f : C\setminus\Ve^\infty(C)\to \R^n$.
If 
$w_{f,e}=0$, we say that the morphism $f$ \textit{contracts} the edge
$e$. 
The morphism $f$ is called
\textit{minimal} if it does not contract
any edge.

Given $u=(u_1,\ldots,u_n)$ a vector in $\R^n$, we define
$$d_u= \max\{0, u_1,\ldots,u_n\}. $$
The \textit{degree} of a tropical morphism $f:C\to \R^n$ is defined by
$$\sum_{e\in\Ed^\infty(C)}w_{f,e}d_{u_{f,e}} $$
where $u_{f,e}$ is chosen so that it points to its adjacent 1-valent vertex.

We define the following vectors in $\R^n$:
$U_1=(-1,0,\ldots,0)$, $U_2=(0,-1,0,\ldots,0)$, $\ldots$, $U_n=(0,\ldots,0,-1)$,
  and $U_{n+1}=(1,\ldots,1)$. A tropical morphism $f:C\to\R^n$ of degree $d$ 
is said to be \textit{transverse at infinity} if $C$ has exactly
$(n+1)d$ non-contracted ends. Note that in this case, 
 for any $i=1,\ldots, n+1$ the curve $C$ has exactly $d$
  edges $e\in\Ed^\infty(C)$ with $u_{f,e}=U_i$, 
where $u_{f,e}$ is chosen so that it points to its adjacent 1-valent vertex.

\begin{exa}
In Figure \ref{exa2} are depicted a tropical conic in
$\R^2$ and a tropical conic in $\R^3$. For each unbounded edge $e$,
the vector $u_{f,e}$ pointing to infinity is written close to  $e$.
\begin{center}
\begin{figure}[h]
\input{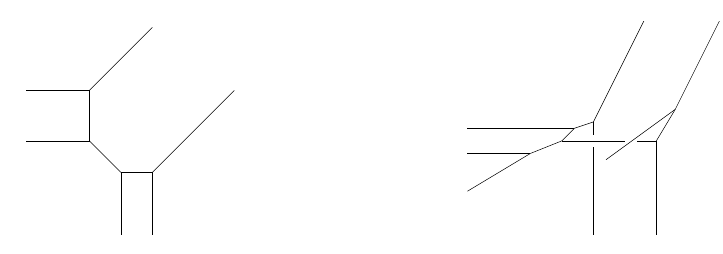_t}
\caption{A tropical conic in $\R^2$ and in $\R^3$}\label{exa2}
\end{figure}
\end{center}
\end{exa}

Two tropical morphisms
$f_1:C_1\to\R^n$ and $f_2:C_2\to\R^n$ are said to be
\textit{isomorphic} if there exists
 an isomorphism of metric graphs $\phi:C_1\to
C_2$ such that $f_1=f_2\circ\phi$.
In this text, we consider  
 tropical curves and tropical morphisms up to
isomorphism. 

Two tropical morphisms $h:C_1\to \R^n$ and $h':C_2\to \R^n$ are said to be of
the same \textit{combinatorial type} if there exists a homeomorphism
of graphs $\phi:C_1\to C_2$  (i.e. we forget
about the metric on $C_1$ 
and $C_2$) such that $h=h'\circ \phi$,
 and 
$w_{h,e}=w_{h', \phi(e)}$ for all $e\in\Ed(C_1)$.

\vspace{1ex}
In this text, we need the notion of \textit{reducible} tropical
morphism.
Given $C_1$ and $C_2$  two tropical curves, $p_1$ and $p_2$ two points
respectively on $C_1$ (resp. $C_2$), the
topological gluing $C_1\cup_{(p_1,p_2)} C_2$ of $C_1$ and $C_2$ at $p_1$
and $p_2$ inherits 
naturally a structure of tropical curve from $C_1$ and $C_2$. The
curve $C_i$ can be seen as a subset of $C$, and the point $p_1=p_2$ is
called 
the \textit{node} of $C$.
\begin{defn}
A minimal tropical morphism $f:C\to \R^n$ is said to be reducible if
there exist two minimal tropical morphisms $f_1:C_1\to \R^n$ and 
 $f_2:C_2\to \R^n$, and a point $p_i\in C_i$, such that $C$ is the
gluing of $C_1$ and $C_2$ at the point $p_1$ and $p_2$, and 
$f_{|C_i}=f_i$.
\end{defn}
We denote such a reducible tropical morphism $f=f_1\cup_p
f_2:C_1\cup_p C_2:\to\R^n$.
If  $f:C\to \R^n$ is a reducible tropical morphism of degree $d$, then 
$f_i:C_i\to \R^n$ is a tropical morphism of degree $d_i$ and
$d_1+d_2=d$. In particular, if $d=2$ then $d_1=d_2=1$.

\subsection{Tropical linear spaces}
Defining tropical linear spaces of $\R^n$
in full generality would require much
more
material than needed in the rest of the paper. Moreover this would force us to
make the distinction between \textit{realizable} and
\textit{non-realizable} tropical linear spaces, notion that we want
to keep out of the scope of this note. Instead, we define a
restricted class of tropical linear spaces of $\R^n$, that we call
\textit{complete tropical linear spaces}.
For a general definition and 
study of tropical linear spaces, we refer to \cite{Spe3}.

\vspace{1ex}
Given $1\le i<j\le n+1$, we denote by $E_{i,j}$ the convex polyhedron of
$\R^n$ obtained by taking all non-negative
real linear combinations 
of all the vectors $U_k$ but $U_i$ and $U_j$, and we define 
$$H_0^n=\cup_{1 \le i<j\le n+1} E_{i,j}.$$ 

\begin{defn}\label{defi linear}
A tropical hyperplane of $\R^n$ is the translation of $H_0^n$ along
any vector of $\R^n$.

A complete tropical linear space of dimension $j$
is the  intersection of $n-j$ tropical hyperplane in general
position. The ambient  space $\R^n$ is a complete tropical linear space of dimension $n$.
\end{defn}
One could avoid the genericity assumption in Definition \ref{defi
  linear} by considering  tropical (or stable) intersections of tropical
hyperplanes in $\R^n$. We refer
to \cite{Mik3} or \cite{St2} for more details.

A tropical linear space of dimension $j$ is a finite polyhedral
complex of pure dimension $j$.

\begin{exa} A tropical plane and a tropical line in $\R^3$ are
  depicted in Figure \ref{exa3}.
\begin{center}
\begin{figure}[h]
\includegraphics[height=2.5cm]{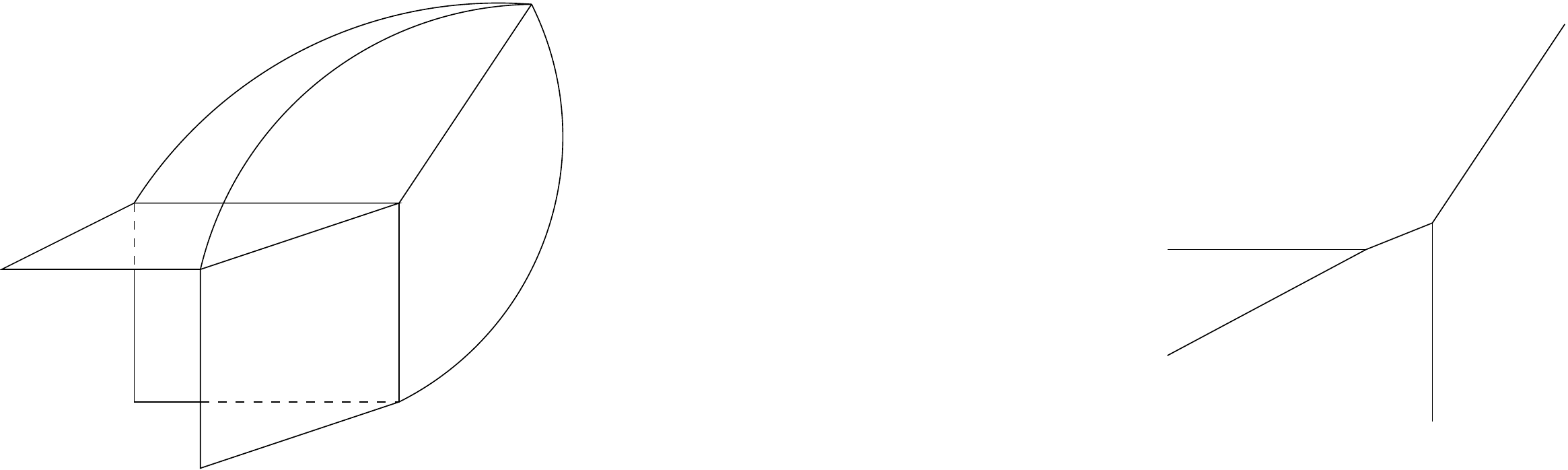}
\caption{A tropical plane and a tropical line in $\R^3$}\label{exa3}
\end{figure}
\end{center}
\end{exa}

\subsection{Tropical enumerative geometry}\label{TEG}
Now we have defined tropical rational curves and complete tropical linear
spaces in $\R^n$, we can play the same game as in section
\ref{intro}. Namely, let us fix some integers $d\ge 1$, $n\ge 2$,
$\gamma\ge 2$,
$l_1\ge 1$, $\ldots$, $l_{\gamma}\ge 1$ 
subject to equality (\ref{RR}),
and let us choose a configuration $\omega=\{L_1,\ldots,L_\gamma\}$ of 
complete  tropical linear spaces in $\R^n$ such that 
$\codim L_j=l_j$ for $j=1,\ldots,\gamma$.
Then we define $\T \mathcal C(\omega)$  as the set of all minimal rational 
morphisms $f:C\to \R^n$ of degree $d$ such that $f(C)$ intersects all
 tropical linear spaces in $\omega$.
This game is 
related to section \ref{intro} by the following fundamental Theorem.

\begin{thm}[Correspondence Theorem, \cite{Mik1}, \cite{Mik08},
    \cite{NS}]\label{corr}
If $\omega$ is generic, then the set  $\T \mathcal C(\omega)$ is
finite and composed of tropical morphisms transverse to infinity. 
Moreover, to each element $f$ in $\T \mathcal
C(\omega)$,
one can associate a positive integer number $\mu(f)$, called the
multiplicity of $f$, which depends only on $f$ and $\omega$ such that
$$N_{d,n}(l_1,\dots,l_{\gamma})=\sum_{f\in\T \mathcal C(\omega)}\mu(f). $$  
\end{thm}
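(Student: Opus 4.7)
The plan is to prove the Correspondence Theorem by a non-archimedean degeneration argument. I would work over the field $\mathbb{K} = \C\{\{t\}\}$ of complex Puiseux series, which carries a natural valuation $\val : \mathbb{K}^* \to \mathbb{Q}$ extending coordinate-wise to a tropicalization map $\val : (\mathbb{K}^*)^n \to \R^n$. First I would lift each complete tropical linear space $L_j \in \omega$ to a $\mathbb{K}$-linear subspace $\widetilde{L}_j \subset \CP^n$ of codimension $l_j$ whose non-archimedean amoeba recovers $L_j$; the existence of such a lift is standard since the tropicalization of a generic linear subspace over $\mathbb{K}$ is a complete tropical linear space in the sense of Definition \ref{defi linear}. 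After base-changing to $\mathbb{K}$, $N_{d,n}(l_1,\ldots,l_\gamma)$ still counts rational curves of degree $d$ in $\CP^n_\mathbb{K}$ meeting $\widetilde{L}_1,\ldots,\widetilde{L}_\gamma$.

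Next I would construct the map from algebraic to tropical. Given an algebraic solution $\phi : \CP^1_\mathbb{K} \to \CP^n_\mathbb{K}$, the image $\val(\phi(\CP^1_\mathbb{K}) \cap (\mathbb{K}^*)^n)$ is a rational polyhedral one-complex, and the parameterization induces an abstract metric tree $C$ together with a tropical morphism $f : C \to \R^n$, whose edge weights record ramification along toric divisors. Genericity of $\omega$ would imply that every such $f$ is transverse at infinity and that $\T\mathcal{C}(\omega)$ is finite: non-transverse tropical morphisms, as well as morphisms with contracted edges meeting $\omega$ non-properly, are cut out by positive-codimension conditions in the finite-dimensional moduli of combinatorial types, so a dimension count excludes them for generic translation parameters of the $L_j$'s.

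For the converse direction, one defines $\mu(f)$ as the number of algebraic lifts $\phi$ tropicalizing to $f$ and passing through the $\widetilde{L}_j$'s, and then the equality in the theorem becomes a tautology provided one shows every algebraic solution arises this way. The multiplicity $\mu(f)$ can be computed locally: the balancing condition at each $v \in \Ve^0(C)$ is the combinatorial shadow of a toric local lifting problem, so $\mu(f)$ factorizes as a product of local vertex contributions (for trivalent vertices, a lattice determinant of primitive edge directions weighted by $w_{f,e}$) times a Jacobian factor measuring the intersection of $f(C)$ with each $L_j$ against its algebraic lift. Higher-valent vertices are handled by a small perturbation of $\omega$ that resolves them into trivalent configurations, and one must check that the resulting local counts are unambiguous.

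The main obstacle is surjectivity together with the global matching of counts: one must verify that no algebraic solution over $\mathbb{K}$ is missed by the tropicalization, and that the local vertex contributions really do assemble to give $\mu(f)$ without overcounting reducible limits or hidden components at the toric boundary. The most robust way to do this is the toric degeneration framework of Nishinou--Siebert: construct a degeneration of $\CP^n_\mathbb{K}$ whose special fiber is a union of toric strata indexed by the vertices and edges of $C$, and apply logarithmic deformation theory to count smoothings of the stable map induced by $f$. Proving that the logarithmic obstructions vanish generically, and that the count of log-smoothings equals the combinatorial $\mu(f)$, is the technical heart; in the plane case this reduces to Viro patchworking, as in Mikhalkin's original argument, but in arbitrary dimension the log-geometric approach is essential.
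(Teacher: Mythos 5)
First, a point of calibration: the paper does not prove Theorem \ref{corr} at all --- it is imported wholesale from \cite{Mik1}, \cite{Mik08}, and \cite{NS}, and the surrounding text only sketches its geometric content via Mikhalkin's family of diffeomorphisms $H_t$ degenerating the complex structure of $(\C^*)^n$, so that $\mu(f)$ is the number of curves in $\mathcal C(\omega_t)$ whose images under $Log\circ H_t$ converge to $f(C)$. Your proposal takes the other standard route to the same statement: replace the archimedean limit $t\to\infty$ by a non-archimedean valuation on Puiseux series and count log-smoothings in a toric degeneration following Nishinou--Siebert. The two pictures are equivalent in outcome, and your outline of the forward direction (lifting the constraints, tropicalizing parameterized curves, excluding non-transverse morphisms and infinite families by a dimension count for generic $\omega$) is sound in spirit; note that the paper's restriction to \emph{complete} tropical linear spaces is exactly what makes your lifting step unproblematic, since it sidesteps the realizability issue the authors explicitly mention.

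As a proof, however, the proposal stops where the actual content begins, and you say so yourself: the vanishing of the logarithmic obstructions, the identification of the number of log-smoothings with a combinatorial multiplicity, and above all the surjectivity of tropicalization onto $\T \mathcal C(\omega)$ are precisely the theorems of \cite{NS} and \cite{Mik08}, not consequences of your setup. Two further points deserve flagging. First, \cite{NS} treats incidence with \emph{points}; the version needed here, with constraints that are linear subspaces of arbitrary codimension in $\CP^n$, requires a genuine extension of the correspondence (which is why \cite{Mik08} is also cited), and your sketch does not explain how the incidence conditions against the lifts $\widetilde L_j$ enter the obstruction theory. Second, your claim that $\mu(f)$ factorizes as a product of local vertex contributions times a Jacobian factor is the correct picture in the plane, but in $\R^n$ with linear-space constraints the multiplicity is most naturally a single lattice index of a global evaluation map on the polyhedron of deformations of the combinatorial type, and it does not split vertex-by-vertex in general. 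Neither issue is fatal --- both are resolved in the cited literature --- but a self-contained argument would have to supply them, so what you have is an accurate roadmap to the references rather than a proof.
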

There is a combinatorial definition of
the integer $\mu(f)$ just in terms of the tropical morphism
$f$ and the configuration $\omega$.
However we won't need it in this
paper, so instead of giving the precise definition of $\mu(f)$, let us
just explain its geometrical
meaning.

Theorem \ref{corr} is obtained by degenerating the standard complex
structure on $(\C^*)^n$ via the following self-diffeomorphism of  $(\C^*)^n$
$$\begin{array}{cccc}
H_t:&(\C^*)^n&\longrightarrow& (\C^*)^n
\\ &(z_i) &\longmapsto & (|z_i|^{\frac{1}{\log t}}\frac{z_i}{|z_i|})
 \end{array}$$
Namely, for any tropical complete linear space $L_j$ of codimension $l_j$
 in $\omega$, there exists a family
$(L_{t,j})_{t>0}$ of complex linear spaces in $(\C^*)^n$ of
 codimension $l_j$ such that the
 sets
$Log\circ H_t(L_{t,j})$ converges to $L_j$ when $t\to\infty$,
 in the Haussdorf metric on compact
 subsets of  $(\R^*)^n$. The map $Log$ is
 defined by $Log(z_i)=(\log|z_i|)$. Hence for each $t$, we associate a
 configuration $\omega_t=\{L_{t,1},\ldots,L_{t,\gamma}\}$ 
of linear subspaces of $(\C^*)^n$.
For  $t$ big enough, the configuration $\omega_t$ is generic if
$\omega$ is generic, so the
 complex rational curves of degree $d$ passing through
 all the linear spaces in $\omega_t$ form a finite set $\mathcal
 C(\omega_t)$. It turns out, and this is the core of Theorem
 \ref{corr},
 that 
the set $Log\circ H_t(\mathcal C(\omega_t))$ converges to the set $\T
\mathcal C(\omega)$, and that for any tropical morphism $f\in\T
\mathcal C(\omega)$ there exist exactly $\mu(f)$ complex curves in 
$\mathcal C(\omega_t)$ whose image under $Log\circ H_t$ converge to
$f(C)$.

Suppose now that the linear spaces $L_{t,j}$ are chosen to be all 
real (this is always
possible). In particular, all curves in $\mathcal C(\omega_t)$ are either real
or come in pairs of complex conjugated curves. 
A very important property of the map $H_t$ is that it commutes with
the standard complex conjugation in $(\C^*)^n$. As a consequence,
both curves in 
a pair of complex conjugated curves in $\mathcal C(\omega_t)$
have the same image under $Log\circ H_t$. 
In particular we have the following Lemma, where $[\mu(f)]_2$ denotes
the value modulo 2 of the integer $\mu(f)$.
\begin{lem}\label{real mod}
If $\omega$ is generic, then 
there exists a generic configuration $\Omega$ of real
linear spaces in $\R P^n$ 
such that there exist at least $\sum_{f\in\T\mathcal C(\omega)}[\mu(f)]_2$
 real rational curves of degree $d$ in $\R P^n$
intersecting all linear spaces in $\Omega$.
\end{lem}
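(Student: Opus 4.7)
The plan is to apply Theorem \ref{corr} together with the conjugation-equivariance of the family $H_t$ discussed just before the lemma. First I choose, for each tropical linear space $L_j\in\omega$, the approximating family $(L_{t,j})_{t>0}$ to consist entirely of \emph{real} complex linear subspaces of the correct codimension; this is possible since real subspaces of a given codimension form a full-dimensional subset of the corresponding Grassmannian. The resulting configuration $\omega_t=\{L_{t,1},\ldots,L_{t,\gamma}\}$ is then real, and for $t$ large enough it is generic in the sense required by the complex enumerative problem. I will take $\Omega$ to be the image of $\omega_t$ in $\R P^n$ (via the standard inclusion $(\C^*)^n\subset\C P^n$) for such a $t$.

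By Theorem \ref{corr}, for $t$ large enough the set $\mathcal{C}(\omega_t)$ of complex rational curves of degree $d$ through $\omega_t$ decomposes as a disjoint union
\[
\mathcal{C}(\omega_t)=\bigsqcup_{f\in\T\mathcal{C}(\omega)}\mathcal{C}_f(\omega_t),
\]
where $\mathcal{C}_f(\omega_t)$ consists of exactly $\mu(f)$ curves whose image under $Log\circ H_t$ converges to $f(C)$. The next key observation is that each part $\mathcal{C}_f(\omega_t)$ is invariant under the standard complex conjugation of $(\C^*)^n$: because $\omega_t$ is real, conjugation acts on $\mathcal{C}(\omega_t)$; because $H_t$ commutes with conjugation, a curve $X$ and its conjugate $\overline{X}$ have identical images under $Log\circ H_t$, so they sit in the same part $\mathcal{C}_f(\omega_t)$.

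Finally, any finite conjugation-invariant set of complex curves splits into fixed points (real curves) and pairs of conjugate non-real curves, so the number of real curves in $\mathcal{C}_f(\omega_t)$ has the same parity as $\mu(f)=|\mathcal{C}_f(\omega_t)|$. In particular it is at least $[\mu(f)]_2$. Summing over $f\in\T\mathcal{C}(\omega)$ gives at least $\sum_{f}[\mu(f)]_2$ real curves passing through $\Omega$, which is the required bound. There is no real obstacle here beyond Theorem \ref{corr}; the only point demanding minor care is the routine verification that the approximating family can be chosen both real and generic, and that extending $\omega_t$ from $(\C^*)^n$ to $\R P^n$ does not introduce or lose any curves, which follows from standard compactness arguments inherent in the correspondence theorem.
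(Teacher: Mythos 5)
Your proposal is correct and follows essentially the same route as the paper, which proves the lemma implicitly in the paragraph preceding its statement: choose the approximating linear spaces $L_{t,j}$ real, observe that $H_t$ commutes with conjugation so conjugate curves in $\mathcal C(\omega_t)$ have the same image under $Log\circ H_t$ and hence lie in the same cluster of $\mu(f)$ curves converging to $f(C)$, and conclude by the parity argument. Your write-up merely makes explicit the decomposition and the parity step that the paper leaves to the reader.
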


Theorem \ref{main} is a  consequence of Theorem \ref{corr} and
Lemma \ref{real mod}: we  exhibit generic configurations $\omega$
such that the set $\T\mathcal C(\omega)$  contains exactly 
$N_{2,n}(l_1,\dots,l_{\gamma})$
distinct tropical curves. Hence, all of them must have multiplicity 1,
which implies the maximality of the corresponding enumerative
problem by Lemma  \ref{real
  mod}.
The main tool to exhibit such configurations $\omega$
is  the \textit{floor
decomposition} technique.

\subsection{Enumeration of tropical reducible conics}\label{red}
Here we state some easy facts about a small variation of the problem 
exposed in section \ref{TEG}. Namely, we enumerate tropical reducible conics
passing through a generic collection of complete tropical linear
spaces.

\vspace{1ex}
Next Lemma is  standard, see for example \cite{Br9} or \cite{GMK}.
\begin{lem}\label{dim red}
Let $\alpha$ be a combinatorial type of  reducible morphisms 
 $f_1\cup_p f_2:C_1\cup_p C_2\to \R^n$
of degree
2.
Then the space of all reducible tropical morphisms with combinatorial
type $\alpha$ is naturally a convex
polyhedron of 
 dimension at most $3n-2$, with equality if and only
if $C_i$ is trivalent and $p_i$ is not a vertex of $C_i$ for $i=1,2$.
\end{lem}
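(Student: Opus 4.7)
The plan is to view $\alpha$ as the combined data of combinatorial types $\tau_1, \tau_2$ for the two degree $1$ components $(C_1, f_1), (C_2, f_2)$, together with the specification of where each marked point $p_i$ sits on $C_i$ (at a specified vertex, or in the interior of a specified edge). The space of reducible morphisms of type $\alpha$ will then be cut out from a product of well-understood polyhedra by the single vector equation $f_1(p_1) = f_2(p_2)$ in $\R^n$.

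First, I recall that for fixed combinatorial type $\tau_i$, the space of tropical morphisms $f_i : C_i \to \R^n$ of degree $1$ transverse at infinity is a convex polyhedron of dimension $e_b(\tau_i) + n$, where $e_b(\tau_i)$ denotes the number of bounded edges of $C_i$: one positive length parameter per bounded edge, plus $n$ parameters of translation recording the image of a chosen vertex. Since $C_i$ has exactly $n+1$ ends and no $2$-valent vertex, a standard count using $\#\Ve(C_i) - \#\Ed(C_i) = 1$ and $\sum_v \val(v) = 2\,\#\Ed(C_i)$ gives $e_b(\tau_i) \le n - 2$, with equality if and only if $\tau_i$ is trivalent.

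Adding the marked point $p_i$ contributes no new parameter when $p_i$ is a vertex of $C_i$, and exactly one parameter (position along the prescribed edge) otherwise; write $\delta_i \in \{0,1\}$ for this contribution. The space of tuples $(C_1, f_1, p_1, C_2, f_2, p_2)$ realizing $\alpha$ is then a convex polyhedron of dimension
\[
e_b(\tau_1) + e_b(\tau_2) + 2n + \delta_1 + \delta_2 \le 4n - 2,
\]
with equality exactly when both $\tau_i$ are trivalent and both $p_i$ lie in the interior of an edge.

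Finally, the reducible morphisms of type $\alpha$ are cut out by the gluing condition $f_1(p_1) - f_2(p_2) = 0$. Since $f_1(p_1)$ and $f_2(p_2)$ depend affinely on the edge lengths, the marked point positions, and the vertex translations, this condition defines an affine subspace whose intersection with the source polyhedron is again a convex polyhedron. The only substantive point is surjectivity onto $\R^n$ of the gluing map $(C_1, f_1, p_1, C_2, f_2, p_2) \mapsto f_1(p_1) - f_2(p_2)$: this is immediate, because translating $f_1$ by any $v \in \R^n$ shifts $f_1(p_1)$ by $v$ while leaving $f_2(p_2)$ fixed. Hence the gluing drops the dimension by exactly $n$, yielding $\dim \le 4n - 2 - n = 3n - 2$, with equality precisely in the stated case.
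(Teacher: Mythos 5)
Your argument is correct, and it is exactly the ``standard'' argument that the paper does not spell out: Lemma \ref{dim red} is stated with a pointer to \cite{Br9} and \cite{GMK} and no proof is given in the text. Your decomposition --- the product of the two deformation spaces of degree-$1$ components (each of dimension $e_b(\tau_i)+n\le 2n-2$, with equality iff trivalent), plus $\delta_i\in\{0,1\}$ for each marked point, cut down by the evaluation condition $f_1(p_1)=f_2(p_2)$, which drops the dimension by exactly $n$ because translations of one component make the difference map a submersion --- is the expected proof. Two small points worth being aware of, neither of which is really a gap on your part: (i) the count $e_b(\tau_i)\le n-2$ uses that $C_i$ has $n+1$ ends, i.e.\ that the morphism is transverse at infinity; a trivalent degree-$1$ type with fewer non-contracted ends would give strictly smaller dimension, so the ``if'' direction of the equality statement implicitly assumes transversality at infinity (as does the lemma itself, in keeping with the conventions of this literature). (ii) For equality one also needs the affine slice $\{f_1(p_1)=f_2(p_2)\}$ to actually meet the (relatively open) product polyhedron; this is automatic once the combinatorial type $\alpha$ is realized by at least one reducible morphism, which is the only situation in which the statement has content.
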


Let  us fix some integers 
$l_0\ge 0$ and $l^1_1$, $\ldots$, $l^1_{\gamma_1},l^2_1$, $\ldots$,
$l^2_{\gamma_2}\ge 1$ such that 
$$l_0+\sum_{i=1,2}\quad \sum_{j=1}^{\gamma_i}(l^i_j-1)=3n-2$$                                                                 
and let us choose $L_0$ a tropical complete linear space in $\R^n$
of codimension $l_0$ and
two configurations $\omega^1$ and $\omega^2$ of 
complete  tropical linear spaces in $\R^n$ such that 
$\omega^i=\{L^i_1,\ldots,L^i_{\gamma_i}\}$ with
 $\codim L^i_j=l^i_j$.
Then we define $\T \mathcal C_{red}(L_0,\omega^1,\omega^2)$  as the set of all
reducible rational 
morphisms 
$f=f_1\cup_p f_2:C_1\cup_p C_2\to \R^n$ 
of degree $2$ such that $f_i(C_i)$ intersects all
 tropical linear spaces in $\omega^i$ for $i=1,2$ and $f(p)\in L_0$.
Note that if $\T \mathcal C_{red}(L_0,\omega^1,\omega^2)\ne
\emptyset$, we necessarily have
$$\sum_{j=1}^{\gamma_i}(l^i_j-1)\le 2n-2 \quad \text{for}\quad i=1,2.$$

Next Lemma is a straightforward application of Lemma \ref{dim red} and 
standard techniques in
tropical enumerative geometry, see for example \cite{Br9}, \cite{NS},
or
 \cite{GMK}.

\begin{lem}\label{generic red}
For a generic choice of  $L_0$, $\omega^1$, and $\omega^2$, the set 
$\T \mathcal C_{red}(L_0,\omega^1,\omega^2)$ is finite, and composed
of tropical morphisms transverse to infinity.
\end{lem}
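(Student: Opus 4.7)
The plan is to follow the standard stratum-by-stratum approach, invoking Lemma \ref{dim red} and a Sard-type transversality argument. First, I would enumerate the finitely many combinatorial types $\alpha$ of reducible minimal tropical morphisms $f_1 \cup_p f_2 : C_1 \cup_p C_2 \to \R^n$ of degree $2$, and denote by $\mathcal{M}_\alpha$ the associated space of morphisms; by Lemma \ref{dim red}, $\mathcal{M}_\alpha$ is a convex polyhedron of dimension at most $3n-2$, with equality precisely when both $C_i$ are trivalent and each $p_i$ lies in the interior of an edge.

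For each $\alpha$, I would form the evaluation map
\[
\mathrm{ev}_\alpha : \mathcal{M}_\alpha \times \mathcal{L} \longrightarrow \R^{3n-2},
\]
where $\mathcal{L}$ is the affine parameter space of admissible configurations $(L_0,\omega^1,\omega^2)$ and $\mathrm{ev}_\alpha$ records the concatenated defect of the incidences $f(p)\in L_0$ and $f_i(C_i)\cap L^i_j\neq\emptyset$. By the codimension hypothesis, the target dimension equals $l_0+\sum_{i,j}(l^i_j-1)=3n-2$. Exactly as in the irreducible setting (see for example \cite{NS}, \cite{GMK}, \cite{Br9}), a Sard-type argument applied to each $\alpha$ produces a dense open subset of $\mathcal{L}$ over which the restriction $\mathrm{ev}_\alpha(\,\cdot\,,L_0,\omega^1,\omega^2)$ is transverse to $0$. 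Intersecting these open sets over the finite list of types gives one dense open subset $\mathcal{L}^\circ \subset \mathcal{L}$ for which the conclusion holds.

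Over any configuration in $\mathcal{L}^\circ$, types with $\dim \mathcal{M}_\alpha < 3n-2$ contribute no morphism, while types with $\dim \mathcal{M}_\alpha = 3n-2$ contribute only discretely many solutions, hence finitely many (as $\mathcal{M}_\alpha$ is a finite polyhedron). Summing over the finite list of combinatorial types yields the finiteness of $\T \mathcal C_{red}(L_0,\omega^1,\omega^2)$. For transversality at infinity, I would then observe that a top-dimensional stratum consists of two trivalent degree-$1$ morphisms glued at an internal point. A direct count (trivalent tree with $q_i$ ends contributes $n+q_i-3$ to the dimension, and the gluing contributes $2-n$) shows $\dim\mathcal{M}_\alpha = n+q_1+q_2-4$, so the equality with $3n-2$ forces $q_1 = q_2 = n+1$; hence $C$ has $2(n+1)$ non-contracted ends, which is exactly the condition for $f$ to be transverse at infinity.

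The main obstacle I anticipate is the Sard-type transversality claim, but it is a direct adaptation of the well-established argument for irreducible tropical curves. The only additional bookkeeping is the presence of the marked node with the condition $f(p) \in L_0$, which simply adds the summand $l_0$ to the target-dimension count for $\mathrm{ev}_\alpha$ and does not interfere with the transversality machinery.
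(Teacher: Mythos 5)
Your proposal is correct and follows essentially the same route as the paper, which does not write out a proof at all but simply declares the lemma ``a straightforward application of Lemma \ref{dim red} and standard techniques in tropical enumerative geometry'' with references to \cite{Br9}, \cite{NS}, and \cite{GMK}; your stratification by combinatorial type, the evaluation-map transversality argument, and the end-count $q_1+q_2=2n+2$ forcing $q_i=n+1$ are exactly the standard machinery being invoked.
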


\vspace{1ex}
Note that we can pose the same problem in complex geometry, and that
we can easily give the answer in terms of the numbers
$N_{1,n}$. Namely, let
 $L_0$ a  linear space in $\C P^n$ of codimension $l_0$ and
two configurations $\omega^1$ and $\omega^2$ of 
 linear spaces in $\C P^n$ such that 
$\omega^i=\{L^i_1,\ldots,L^i_{\gamma_j}\}$ with
 $\codim L^i_j=l^i_j$.
Then we define 
$N^{red}_{2,n}(l_0,\{l^1_1,\ldots,l^1_{\gamma_1}\},\{l^2_1,\ldots,l^2_{\gamma_1}\})$
as the number of reducible  
 conics $C_1\cup_p C_2$ in $\C P^n$
 such that $C_i$ intersects all
  spaces in $\omega^i$ for $i=1,2$ and $p\in L_0$.
\begin{lem}\label{complex red}
With the hypothesis above, we have
$$N^{red}_{2,n}(l_0,\{l^1_1,\ldots,l^1_{\gamma_1}\},\{l^2_1,\ldots,l^2_{\gamma_2}\})
=\prod_{i=1,2}N_{1,n}\left(2n-1
-\sum_{j=1}^{\gamma_i}(l^i_j-1), l^i_1,\ldots,l^i_{\gamma_i} \right).$$ 
\end{lem}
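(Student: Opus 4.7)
The plan is to realize $N^{red}_{2,n}(l_0,\omega^1,\omega^2)$ as an intersection number on $\mathcal M\times\mathcal M$, where $\mathcal M=\{(C,p) : C \text{ is a line in } \C P^n,\ p\in C\}$ is the universal line (of dimension $2n-1$), and then apply the diagonal expansion in $H^*(\C P^n\times\C P^n)$ to split the shared node condition into independent contributions on each factor. Writing $s_i=\sum_{j=1}^{\gamma_i}(l^i_j-1)$, the hypothesis of the lemma reads $s_1+s_2+l_0=3n-2$.

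I would let $[\omega^i]\in H^*(\mathcal M)$ denote the codimension-$s_i$ class of pairs $(C,p)$ such that $C$ meets every element of $\omega^i$ (pulled back from the Grassmannian of lines), and write $ev_i:\mathcal M\to\C P^n$ for $(C,p)\mapsto p$. Then $N^{red}_{2,n}(l_0,\omega^1,\omega^2)$ is the intersection number on $\mathcal M\times\mathcal M$ of $\pi_1^*[\omega^1]$, $\pi_2^*[\omega^2]$ and $(ev_1,ev_2)^*[\Delta_{L_0}]$, where $\Delta_{L_0}=\{(q,q) : q\in L_0\}\subset\C P^n\times\C P^n$. Using $H^*(\C P^n\times\C P^n)=\Z[H_1,H_2]/(H_1^{n+1},H_2^{n+1})$, the standard identity $[\Delta]=\sum_{k=0}^n H_1^k H_2^{n-k}$, and $[L_0\times\C P^n]=H_1^{l_0}$, I would expand
\[
[\Delta_{L_0}]=[\Delta]\cdot H_1^{l_0}=\sum_{k=0}^{n-l_0} H_1^{k+l_0} H_2^{n-k}.
\]

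The intersection number therefore decomposes as a sum over $k$ of products of two integrals on $\mathcal M$. A dimension count forces the $k$-th term to vanish unless $s_1+k+l_0=2n-1$ and $s_2+(n-k)=2n-1$; thanks to $s_1+s_2+l_0=3n-2$ these two equations are equivalent, so exactly one term contributes, namely the one with $k+l_0=l'_{0,1}:=2n-1-s_1$ and $n-k=l'_{0,2}:=2n-1-s_2$. I would then identify each surviving factor with the corresponding $N_{1,n}$: the class $ev_i^*H^{l'_{0,i}}$ is represented by $\{(C,p)\in\mathcal M : p\in L'\}$ for a generic linear subspace $L'\subset\C P^n$ of codimension $l'_{0,i}$, and since a line meets such an $L'$ in at most one point, the integral $\int_{\mathcal M}[\omega^i]\cdot ev_i^*H^{l'_{0,i}}$ counts each line satisfying $\omega^i$ that meets $L'$ exactly once, which is $N_{1,n}(l'_{0,i},\omega^i)$. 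Multiplying the two factors yields the claim.

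The main technical point I expect is enumerative transversality: one must guarantee that the above classes intersect properly with multiplicity one, on an irreducible component of the expected dimension. This is the complex counterpart of Lemma \ref{generic red} and follows from standard Bertini--Kleiman genericity arguments applied to generic $L_0,\omega^1,\omega^2$. The degenerate situations $l'_{0,i}\notin[1,n]$ make both sides vanish automatically through the paper's conventions on $N_{1,n}$ together with the summation range $k\in[0,n-l_0]$.
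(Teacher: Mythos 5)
Your proof is correct, but it runs along a different track than the paper's. The paper's argument stays entirely inside $\CP^n$: it introduces the variety $V_i\subset\CP^n$ swept out by all lines meeting every element of $\omega^i$, observes that the reducible conics in question are in bijection with the points of $V_1\cap V_2\cap L_0$, notes that $\codim V_1+\codim V_2+\codim L_0=n$ so that B\'ezout gives $|V_1\cap V_2\cap L_0|=\deg V_1\cdot\deg V_2$, and finally identifies $\deg V_i$ with $N_{1,n}\left(2n-1-\sum_j(l^i_j-1),\, l^i_1,\ldots,l^i_{\gamma_i}\right)$ by slicing $V_i$ with a generic linear space of complementary dimension. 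You instead never introduce the image varieties: you lift everything to the product of two copies of the universal line $\mathcal M$, encode the shared-node condition as the pullback of $[\Delta]\cdot H_1^{l_0}$, and let the K\"unneth decomposition $[\Delta]=\sum_k H_1^kH_2^{n-k}$ together with a dimension count isolate the single surviving term; your final identification of $\int_{\mathcal M}[\omega^i]\cdot ev_i^*H^{l'_{0,i}}$ with $N_{1,n}(l'_{0,i},\omega^i)$ is the exact cohomological counterpart of the paper's ``degree of $V_i$'' step (indeed $ev_{i*}[\omega^i]=\deg V_i\cdot H^{\codim V_i}$, so pushing your computation forward recovers the paper's). The trade-off: the paper's route is shorter and more elementary, but quietly relies on the geometry of $V_i$ (that it has the expected dimension, that a generic complementary linear space meets each line of the family in one point, and that each point of $V_1\cap V_2\cap L_0$ lies on a unique line of each family); your route replaces that with the formal diagonal expansion, which automates the bookkeeping and pushes all the geometric input into the single transversality statement you flag at the end. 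Both proofs leave that transversality/multiplicity-one point at the level of ``standard genericity,'' so you are not assuming anything the paper does not. One small remark: your parenthetical treatment of the degenerate cases $l'_{0,i}\notin[1,n]$ is consistent with the paper's vanishing conventions for $N_{1,n}$, and the case $l'_{0,i}=1$ (where $L'$ is a hyperplane and meets every line exactly once) still gives the right count, so no correction is needed there.
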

\begin{proof}
Let $V_i$ be the  algebraic variety in $\CP^n$ given by the
union of all lines passing through  all linear spaces in
$\omega^i$. By definition, the number 
$N^{red}_{2,n}(l_0,\{l^1_1,\ldots,l^1_{\gamma_1}\},\{l^2_1,\ldots,l^2_{\gamma_1}\})$
is equal to the number of intersection points in $V_1\cap V_2\cap
L_0$, i.e. is equal to the product of the degree of $V_1$ and $V_2$. 
Since the configuration of linear spaces is generic, $V_i$
has dimension $2n-1-\sum_{j=1}^{\gamma_i}(l^i_j-1)$. The degree of
$V_i$ is its number of intersection points with a generic linear space in $\C
P^n$ of complementary dimension, 
and this number is by definition  precisely 
$N_{1,n}\left( 2n-1
-\sum_{j=1}^{\gamma_i}(l^i_j-1), l^i_1,\ldots,l^i_{\gamma_i}\right)$.
\end{proof}

\section{Floor decomposition of tropical curves}\label{section: floor dec}
Here we explain how to enumerate complex curves of degree $1$ and
$2$ with the help of
 the floor decomposition technique. This technique works for any degree
 (see \cite{Br7}, \cite{Br6b}, \cite{Br6}) but the exposition of the
 method in its full generality would require a quite heavy formalism which 
in our opinion would harm to the well understanding of this
text. Hence we  just give the main idea of the method before
focusing on the degree 1 and 2 cases. 

Note that  floor decomposition
technique has strong connections with the Caporaso and Harris method
(see \cite{CapHar1}),  extended later by Vakil (see \cite{Vak1}), and
with the neck-stretching method in symplectic field theory (see \cite{EGH}).

We denote by $\pi:\R^n\to \R^{n-1}$  the linear projection
forgetting the last coordinate. 
Given a minimal tropical morphism $f:C\to\R^n$, the morphism $\pi\circ
f:C\to\R^{n-1 }$ is not minimal in general. However, there exists a
unique tropical curve $C'$ equipped with a map $\rho :C\to C'$  
and a unique minimal tropical morphism $f':C'\to\R^{n-1}$
such that $f=f'\circ \rho$. We say that $f'$ is \textit{induced} by
$\pi\circ f$, and that $f$ is a \textit{lifting} of $f'$.

\subsection{General method}
The starting idea of floor decomposition is to compute the numbers 
$N_{d,n}(l_1,\dots,l_{\gamma})$ by induction on the dimension $n$. 
As easy at it sounds,
 this approach does not work straightforwardly and one has to 
work carefully: it is easy to compute that through one point $p$ and
two tropical lines $L_1$ and $L_2$
 in $\R^3$ passes exactly 1 tropical line $L$ 
 (see example \ref{exa6}). However, there exists
 infinitely many tropical lines in $\R^2$ passing trough $\pi(p)$,
 $\pi(L_1)$, and $\pi(L_2)$, and without knowing $L$, it is not clear at all which one
of these planar lines is $\pi(L)$.

To make the induction work, we first stretch the configuration
$\omega$ is the direction $U_n=(0,\ldots, 0,-1)$. Then the tropical curves
we are counting break in several \textit{floors} for which we can apply induction.

\begin{exa}\label{exa6}
Let us explain how to use the floor decomposition technique in a simple
case. Let use choose a point $p$ and two tropical lines $L_1$ and
$L_2$ in $\R^3$ such that $L_1$ (resp. $L_2$) consists of only one edge with
direction $(0,1,0)$ (resp. $(1,0,0)$) contained in the horizontal
plane with equation ${z=a_1}$ (resp. ${z=a_2}$). If the third coordinate of
$p$ is much more bigger than $a_1$ which in its turn  is  much more bigger
than $a_2$, then the unique tropical line $L$ in $\R^3$ passing through
$p$, $L_1$, and $L_2$ is depicted in Figure \ref{fig6}, and $\pi(L)$
is the unique tropical line in $\R^2$ passing through $\pi(p)$ and
$\pi(L_1)\cap \pi(L_2)$.

\begin{figure}[h]
\includegraphics[height=2cm, angle=0]{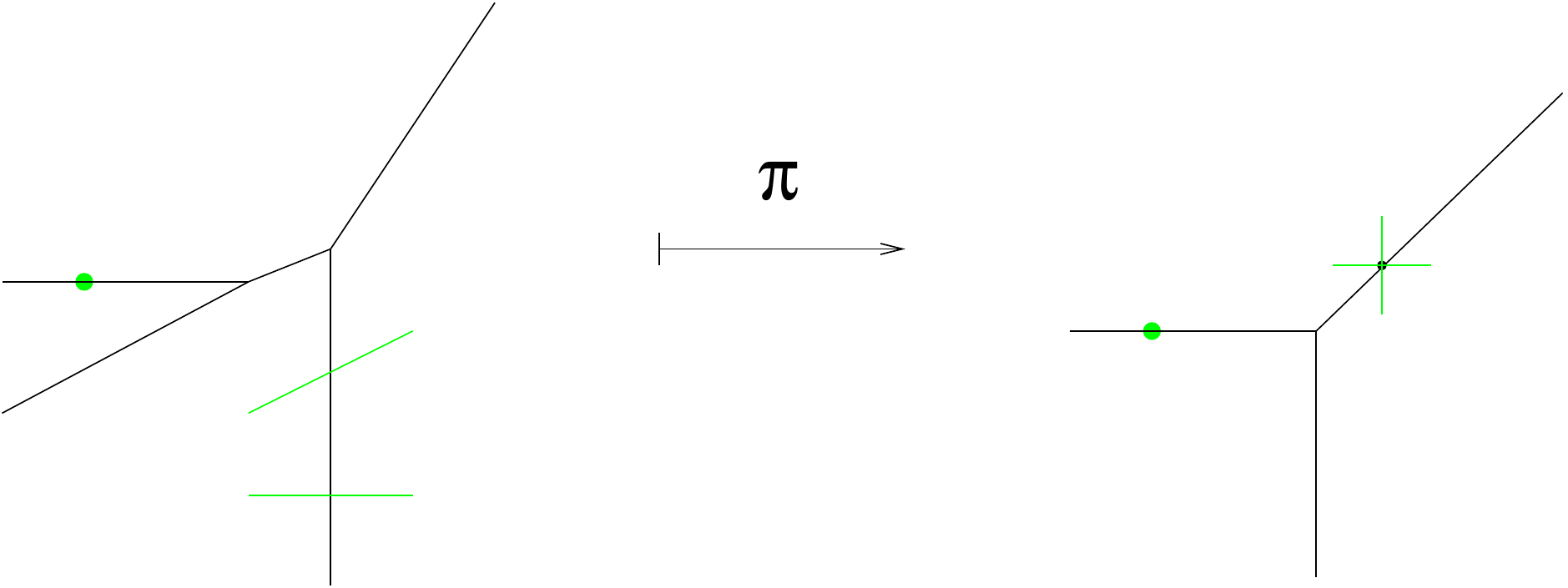}
\caption{Floor decomposition technique to compute $N_{1,3}(3,2,2)=1$}\label{fig6}
\end{figure}
\end{exa}

\begin{defn}\label{floor}
Let $C$ be a rational tropical curve and
$f:C\to \R^n$ a tropical morphism. An elevator of $f$ is an edge $e$ 
of $C$ with $u_{f,e}=(0,\ldots, 0,\pm 1)$.
A floor of $f$ is a connected
component of $C$ minus all its elevators.
\end{defn}
Note that if $f$ is a morphism of degree $d$ in $\R^n$ and $\F$ is a
floor of $f$, then $C$ induces a structure of tropical curve
on $\F$ and
$\pi\circ f_{|\F}:\F\to\R^{n-1}$ is a tropical morphism of degree
$1\le d'\le d$. The integer $d'$ is called the \textit{degree} of $\F$.

\begin{exa}
Examples of planar and spatial conics are depicted in
Figure \ref{confloors}. Elevators are depicted in dotted lines.
\begin{figure}[h]
\begin{tabular}{ccc}
\includegraphics[height=2cm, angle=0]{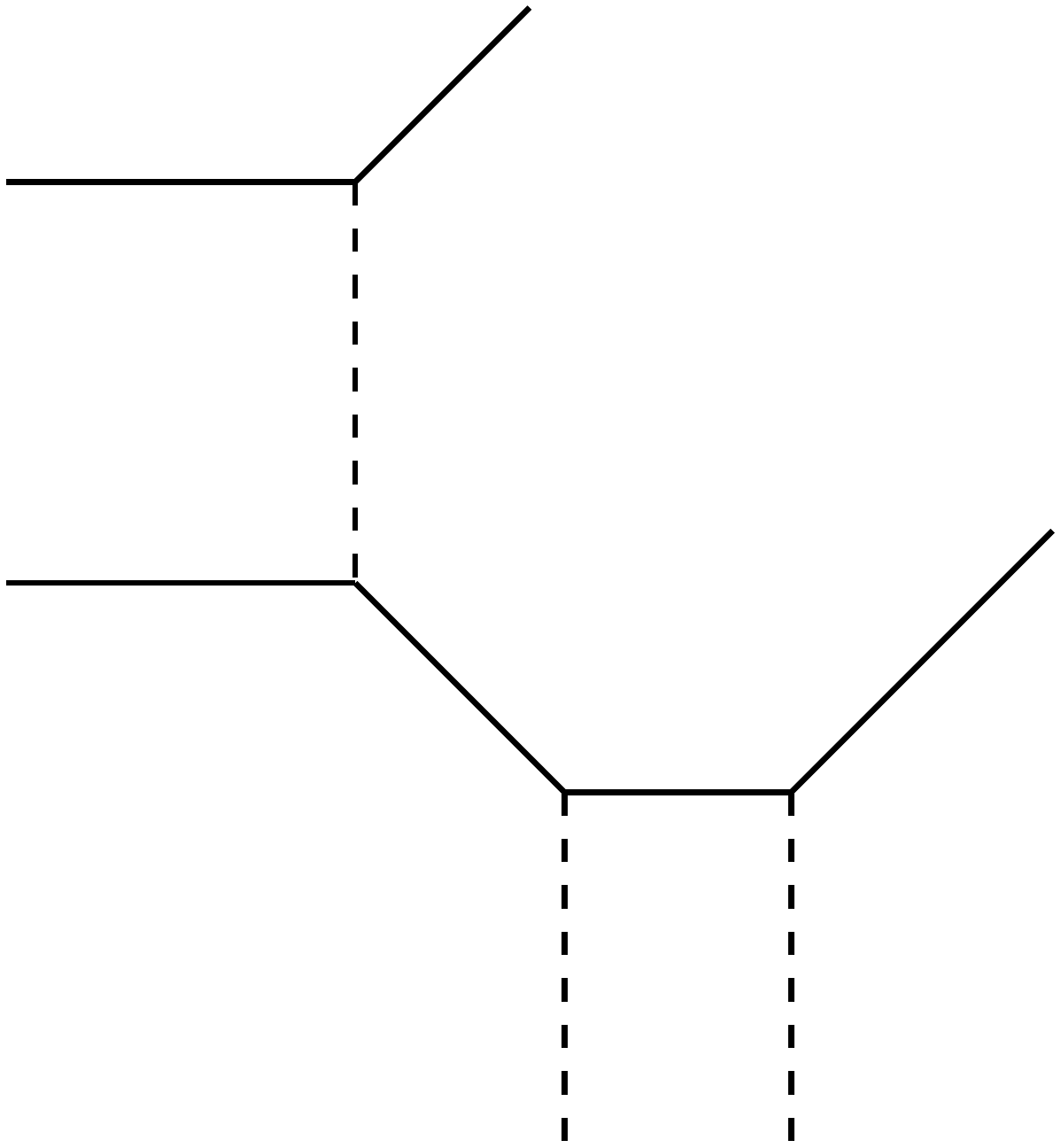}&
\includegraphics[height=2cm, angle=0]{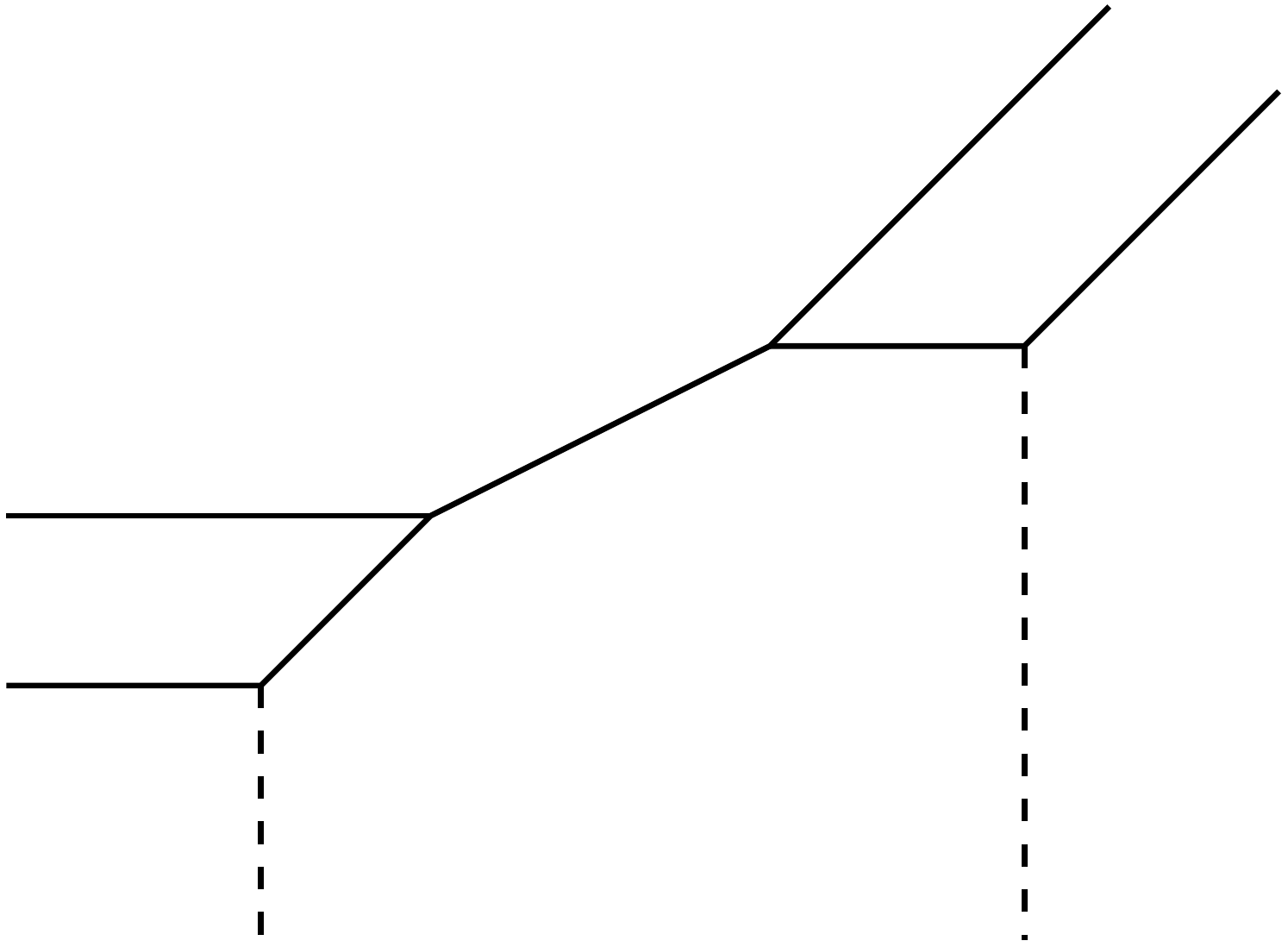}&
\includegraphics[height=2.5cm, angle=0]{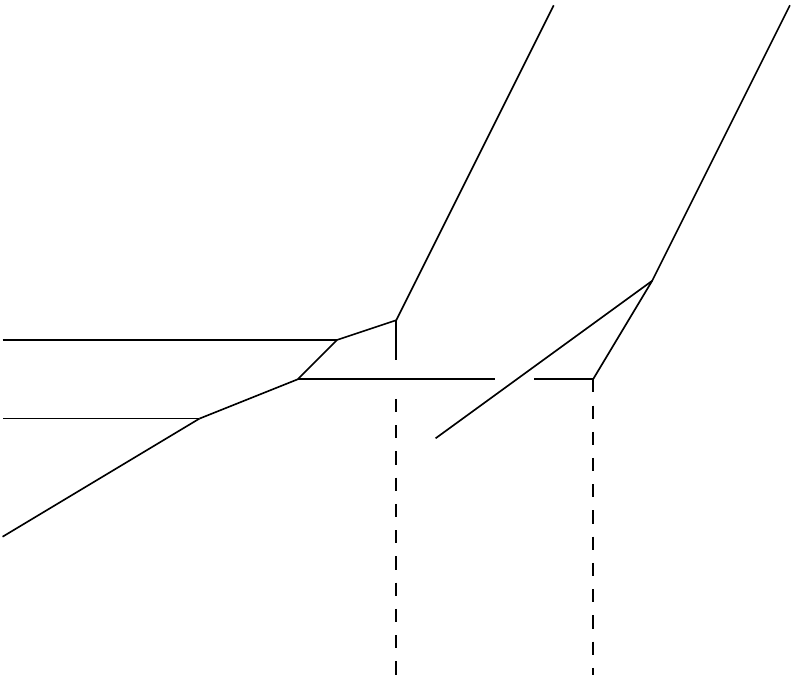}\\
a) A planar conic with two floors&
b) A planar conic
with one floor&
c) A spatial conic with one floor 
\end{tabular}
\caption{}\label{confloors}
\end{figure}
\end{exa}

Definition \ref{floor} extends to any tropical varieties in
$\R^n$, however we keep on restricting ourselves to 
 complete tropical linear spaces.
\begin{defn}\label{wall}
Let $L$ be a complete tropical linear space in $\R^n$ of codimension
$j$. 
The wall
(resp. floor) of $L$
is the union of all faces of $L$ of codimension $j$
which contain (resp. do not contain)
the direction 
$(0,\ldots, 0,1)$. 
\end{defn}
Note that if $L$ is a complete tropical linear space of codimension $j$
in $\R^n$ with
wall $W$ and floor $F$, then $\pi(W)$ is a complete tropical 
linear space of codimension
$j$ in $\R^{n-1}$, and  $\pi(F)=\pi(L)$ is a complete tropical
linear space of codimension
$j-1$ in $\R^{n-1}$.

\begin{exa}
A tropical plane $L$ in $\R^3$ together with its wall $W$ are depicted in
Figure \ref{exa5}.
Clearly,  $\pi(L)=\R^2$ and $\pi(W)$ is a tropical line in $\R^2$.
\begin{center}
\begin{figure}[h]
\input{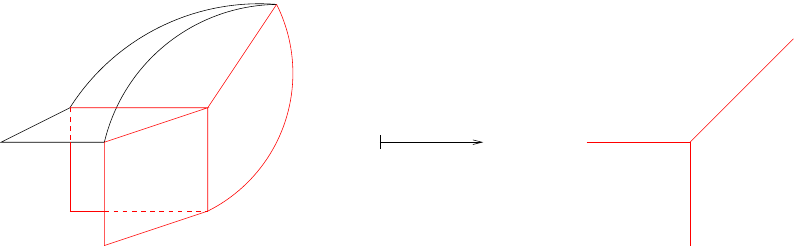_t}
\caption{$\pi(L)=\R^2$ and $\pi(W)$ is a tropical line}\label{exa5}
\end{figure}
\end{center}
\end{exa}

Let us fix some integers $d\ge 1$, $n\ge 2$,
$\gamma\ge 2$,
$l_1\ge 1$, $\ldots$, $l_{\gamma}\ge 1$ 
subject to equality (\ref{RR}),
and let us choose a generic configuration $\omega=\{L_1,\ldots,L_\gamma\}$ of 
complete  tropical linear spaces in $\R^n$ such that 
$\codim L_j=l_j$ for $j=1,\ldots,\gamma$. As before,
$\T \mathcal C(\omega)$  is the set of all rational tropical 
morphisms $f:C\to \R^n$ of degree $d$ such that $f(C)$ intersects all
 tropical linear spaces in $\omega$.

\begin{defn}
An element $L$ of $\omega$ is called a vertical (resp. horizontal)
constraint for $f\in\T
\mathcal C(\omega)$ if $f(C)\cap L$ lies in the wall (resp. floor) of
$L$.
\end{defn}

Let us denote by $\Ve(L_j)$ the set of vertices
of the complete tropical linear space $L_j$,
and let us fix a hypercube $\mathcal H_{n-1}$ in $\R^{n-1}$ 
 such that the cylinder $\mathcal H_{n-1}\times \R$ contains the set
 $\cup_{j=1}^\gamma\Ve(L_j)$.
Given two points $v=(v_1,\ldots,v_n)$ and $w=(w_1,\ldots,w_n)$ in
$\R^n$, we define $|v-w|_n=|v_n - w_n|$.
Finally, we define  $R_{\mathcal H}$ to be the length of the edges of
$\mathcal H_{n-1}$, and 
$$R(\omega)=\min_{j\ne k, \ v\in\Ve(L_j), \ w\in\Ve(L_k)}|v-w|_n. $$
The following observation is the key
point of the technique.

\begin{prop}[Brugallé-Mikhalkin \cite{Br7}, \cite{Br6b},
    \cite{Br6}]\label{floor horizontal} 
There exists a real number $D(n,d)$, depending only in $n$ and $d$,
such that  if $R(\omega)\ge R_{\mathcal H}D(n,d)$ 
 then
 for each morphism $f:C\to\R^n$ in $\T \mathcal C(\omega)$ and for
each floor $\F$ of $f$, $f(\F)$ meets one and exactly one horizontal
constraints.
\end{prop}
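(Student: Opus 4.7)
The plan is to exploit finiteness of combinatorial types of tropical morphisms in order to reduce the proposition to a Lipschitz-type estimate: a floor cannot climb vertically faster than it moves horizontally, while horizontal excursions are controlled by $R_{\mathcal H}$ and vertical separations between constraint vertices are at least $R(\omega)$. Since every rational tropical morphism $f:C\to\R^n$ of degree $d$ transverse at infinity has combinatorial type in a finite list depending only on $n$ and $d$, the primitive directions $u_{f,e}$ of all edges of $f$ lie in a fixed finite set $S(n,d)\subset\Z^n$. For a floor $\F$ no edge has direction $(0,\ldots,0,\pm1)$, so
\[
\lambda(n,d)=\max\Bigl\{\,|u_n|/\|\pi(u)\|\;:\;u\in S(n,d),\ u\neq(0,\ldots,0,\pm1)\,\Bigr\}
\]
is a uniform bound on the vertical-to-horizontal slope of any edge of $f(\F)$.

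For the ``at most one'' direction I would argue by contradiction. Suppose a floor $\F$ meets two horizontal constraints $L_{j_1},L_{j_2}$ at points $p_1,p_2$. Each $p_i$ lies in the floor of $L_{j_i}$; since the floor of a complete tropical linear space contains no face with the vertical direction in its linear span, the intersection of the floor of $L_{j_i}$ with the cylinder $\mathcal H_{n-1}\times\R$ is vertically concentrated in an interval of length $O(R_{\mathcal H})$ around a vertex $v_{j_i}$ of $L_{j_i}$. Combined with an edge-by-edge estimate using $\lambda(n,d)$ along the unique path in $\F$ joining the preimages of $p_1$ and $p_2$, this yields $|p_1-p_2|_n\le C(n,d)\,R_{\mathcal H}$ for an explicit constant $C(n,d)$. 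By hypothesis $|(v_{j_1})_n-(v_{j_2})_n|\ge R(\omega)\ge D(n,d)\,R_{\mathcal H}$, so it suffices to set $D(n,d)$ strictly larger than $C(n,d)$ plus a bounded slack to derive a contradiction.

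For the ``at least one'' direction I would rely on a dimension count applied to each projected floor $\pi\circ f|_\F:\F\to\R^{n-1}$, a minimal tropical morphism of some degree $d'\le d$. Each horizontal constraint $L_j$ absorbed by $\F$ imposes $l_j-2$ independent conditions on the projected morphism (since $\pi(L_j)$ has codimension $l_j-1$ in $\R^{n-1}$), while vertical constraints impose their conditions on the elevators adjacent to $\F$ rather than on $\F$ itself. Because $\T\mathcal C(\omega)$ is finite by Theorem~\ref{corr}, the stratum parametrising each floor after imposing its conditions must be zero-dimensional; a floor absorbing no horizontal constraint would sit in a positive-dimensional stratum, contradicting the genericity of $\omega$.

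The main obstacle is to turn the qualitative claim ``vertical concentration within $O(R_{\mathcal H})$'' into an honest quantitative bound. Both $f(\F)$ and every $L_j$ extend to infinity along rays in the directions $U_1,\ldots,U_{n+1}$, so one must rule out intersection points located far from the cylinder $\mathcal H_{n-1}\times\R$. Outside this cylinder the floor of $L_j$ and the image $f(\F)$ are asymptotic to explicit rays whose combinatorics is fixed by Definition~\ref{defi linear}, and for generic $\omega$ these asymptotic rays cannot meet away from the hypercube. It is precisely this asymptotic confinement that legitimates using $R_{\mathcal H}$ as the correct horizontal scale in the above estimate and produces the constant $D(n,d)$ predicted by the proposition.
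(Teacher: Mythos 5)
The paper does not actually prove Proposition \ref{floor horizontal}: it is imported from \cite{Br7}, \cite{Br6b}, \cite{Br6} without proof, so there is no in-paper argument to compare yours against. Judged on its own, your proposal assembles the right ingredients --- finiteness of combinatorial types in degree $d$, a uniform bound $\lambda(n,d)$ on the vertical-to-horizontal slope of floor edges, and the comparison of the vertical scale $R(\omega)$ with the horizontal scale $R_{\mathcal H}$ --- but it has a genuine gap exactly at the point you label ``the main obstacle'', and that obstacle is the real content of the proposition rather than a technicality.

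Concretely, the estimate $|p_1-p_2|_n\le C(n,d)\,R_{\mathcal H}$ does not follow from the slope bound alone. A floor $\F$ of $f$ has unbounded ends in the direction $U_{n+1}=(1,\ldots,1)$, whose last coordinate is unbounded, and the floor of a constraint $L_j$ likewise contains unbounded faces (cones involving $U_{n+1}$) along which the last coordinate tends to infinity while the points leave the cylinder $\mathcal H_{n-1}\times\R$. So neither the intersection points $p_i$ nor the vertices of $\F$ on the path joining their preimages are a priori confined to a region of horizontal size $O(R_{\mathcal H})$, and genericity does not forbid intersections far from the hypercube: a ray of $f(\F)$ in direction $U_{n+1}$ meets a codimension-one face transversally wherever that face happens to sit, so ``the asymptotic rays cannot meet away from the hypercube'' is unjustified and, as stated, false in general. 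What a complete proof requires is a confinement lemma: for $f\in\T \mathcal C(\omega)$ the images of all vertices of $C$ lie in an explicitly bounded neighborhood determined by the constraints, uniformly over the finitely many combinatorial types; only then does the Lipschitz estimate along the (boundedly many) edges of the path close the ``at most one'' direction. Your ``at least one'' direction is also only sketched, and the assertion that vertical constraints act only on elevators is inaccurate (a floor of $f$ can meet a constraint in its wall). The clean argument there is that a floor meeting no horizontal constraint could be translated vertically, adjusting the lengths of its adjacent elevators, while still meeting every constraint --- intersection points lying in walls move freely in the vertical direction --- which produces a positive-dimensional family in $\T\mathcal C(\omega)$ and contradicts its finiteness for generic $\omega$.
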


\begin{defn}
If $R(\omega)\ge R_{\mathcal H}D(n,d)$, we say that $\omega$ is a \textit{$(d,n)$-decomposing configuration}.
\end{defn}
Note that to be a $(d,n)$-decomposing configuration is the same than
imposing conditions
on the relative position of the vertices of elements of $\omega$. In
particular, it makes sense to say that a configuration $\omega$ is
$(d,n)$-decomposing even if its elements do not satisfy equality (\ref{RR}).
\vspace{1ex}

The choice of the preferred direction $U_n$ provides a natural partial order among the tropical linear
spaces.
\begin{defn}
Let $L$ and $L'$ in $\R^n$ be two complete tropical linear spaces. 
We say that $L$ is
\textit{higher} than $L'$, and denote $L \gg L'$, if any vertex of $L$
has greater last coordinate than all vertices of $L'$. 
\end{defn}

Note that $\R^n$ is greater than any other tropical linear space.
Before explaining in detail the case of lines and conics, we need to
introduce a notation. Given a generic configuration $\{L_1,\ldots,L_\gamma\}$
of complete tropical linear spaces in $\R^{n}$, $k\in\{1,\ldots,\gamma \}$, 
and $A\subset\{1,\ldots,\gamma \}$, we define the following 
complete tropical linear spaces in $\R^{n-1}$
$$L'_k=\pi(W_k),\quad
\widehat L'_k=\pi(L_k),  \quad
\text{and}\quad\widetilde L'_A=\bigcap_{j\in A}\pi(L_j)$$
where $W_j$ is the wall of $L_j$. We also denote by
$\widetilde L'_k$ the complete tropical linear space 
$\widetilde L'_{\{1,\ldots,k\}}$.

\subsection{The case $d=1$}\label{d=1}
Suppose that $d=1$, so that 
$\sum_{j=1}^{\gamma}(l_j-1)=2n-2$. We choose a $(1,n)$-decomposing configuration
$\omega=\{L_1,\ldots,L_\gamma\}$ of complete tropical linear spaces
in $\R^n$, $n\ge 3$, such that $L_j$ has codimension $l_j$ and $L_{j+1}$ is
higher than $L_j$ for $j=1,\ldots,\gamma-1$. We denote by $W_k$ the wall
of the constraint $L_k$.
We denote by $\T \mathcal C(\omega)^{(k)}$ the subset of 
tropical morphisms in 
$\T \mathcal C(\omega)$ whose floor meets the horizontal constraint
$L_k$ (remember that in degree one, the floor is unique).

According to Proposition \ref{floor horizontal}, we have
$$\T \mathcal C(\omega)=\bigsqcup_{k=1}^{\gamma}  \T \mathcal C(\omega)^{(k)}.$$
Given $k$ in $\{1,\ldots,\gamma\}$, we define $\omega'^{(k)}=\{\widetilde L'_{k-1},\widehat L'_k,
L'_{k+1},\ldots, L'_\gamma\}$.
Since $\omega$ is generic,
the tropical linear space $L'_k$ has codimension $l_k$, 
$\widehat L'_{k}$ has codimension $l_k-1$, 
and $\widetilde L'_{k-1}$ has codimension $\sum_{j=1}^{k-1} (l_j-1)$ if non-empty.
If $f:C\to\R^n$ is an element of $\T \mathcal C(\omega)^{(k)}$, then 
the tropical morphism $\pi\circ f:C\to\R^n $ induces obviously an element of 
$\T \mathcal C(\omega'^{(k)})$. Moreover, any tropical
morphism $f'$ in $\T \mathcal C(\omega'^{(k)})$ can be lifted in a unique 
way as 
an element $f$ of 
 $\T \mathcal C(\omega)^{(k)}$: the only unknown is the location of
the elevator of $f$, which is given by the unique intersection
point of $f'(C')$ with $\widetilde L'_{k-1}$ (see Examples \ref{exa6} and
 \ref{exa7}).
Hence, there exists a natural bijection 
between the two sets $\T \mathcal C(\omega)^{(k)}$ and $\T \mathcal
C(\omega'^{(k)})$. Moreover, one can show that this bijection respects
 multiplicity of tropical curves. In other words, we have the
 following Proposition

\begin{prop}[Brugallé-Mikhalkin \cite{Br7}, 
\cite{Br6b}, \cite{Br6}]\label{floor d=1}
For any $k$ in $\{1,\ldots,\gamma\}$, we have
$$\sum_{f\in \T \mathcal C(\omega)^{(k)}}\mu(f)=\sum_{f'\in \T
  \mathcal C(\omega'^{(k)})}\mu(f').$$
\end{prop}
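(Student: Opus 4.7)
The plan is to construct an explicit multiplicity-preserving bijection $\Phi : \T\mathcal C(\omega)^{(k)} \to \T\mathcal C(\omega'^{(k)})$, from which the equality follows by summation.

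Given $f:C\to\R^n$ in $\T\mathcal C(\omega)^{(k)}$, a balancing check at trivalent vertices will show that no bounded edge of $C$ can have direction $(0,\ldots,0,\pm 1)$: at any trivalent vertex, the three adjacent primitive directions satisfy a primitive weighted sum to zero, and since the ends sit in the directions $U_1,\ldots,U_{n+1}$ with weight one, at least two of any three incident directions are forced to carry nonzero horizontal components. Hence the only elevator of $f$ is the unbounded $U_n$-end, and the unique floor $\F$ is $C$ with this end removed. Proposition \ref{floor horizontal} identifies $L_k$ as the only horizontal constraint, and the minimal morphism $f':C'\to\R^{n-1}$ induced by $\pi\circ f$ is a tropical line. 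I would then check that $f'(C')$ meets every element of $\omega'^{(k)}$: it meets $\widehat L'_k=\pi(L_k)$ because $f(\F)$ meets $L_k$; it meets $L'_j=\pi(W_j)$ for each $j>k$ since $\F$ meets $W_j$; and the $U_n$-elevator has a single horizontal location that must lie in every $L'_j$ with $j<k$, hence in $\widetilde L'_{k-1}$. This defines $\Phi(f):=f'$.

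For the inverse, start with $f'\in\T\mathcal C(\omega'^{(k)})$. Genericity forces $f'(C')\cap\widetilde L'_{k-1}$ to be a single point $q$ with unique preimage $p\in C'$, and I would build $C$ from $C'$ by attaching an unbounded $U_n$-end at $p$, setting $\pi\circ f=f'\circ\rho$. For each bounded edge of $C'$ of weighted direction $w'_e u'_e$, the lift to $\Z^n$ has its last coordinate uniquely determined by balancing propagated inward from the prescribed ends $U_1,\ldots,U_{n-1},U_{n+1}$, while the new $U_n$-elevator absorbs the residual vertical balance at $p$. The global vertical translation is then fixed by requiring $f(C)$ to meet $L_k$. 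Because $\omega$ is $(1,n)$-decomposing, the resulting floor sits in a thin horizontal slab around the height of $L_k$, so each intersection $f'(C')\cap L'_j$ with $j>k$ lifts to an intersection of $\F$ with $W_j$, and the downward $U_n$-elevator descends low enough to meet every $L_j$ with $j<k$ in its wall. This shows $\Phi$ is a bijection.

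The multiplicity equality $\mu(f)=\mu(\Phi(f))$ will follow from a local vertex comparison based on the combinatorial multiplicity formula in \cite{Mik08,NS}. Each internal vertex of $\F$ corresponds to an internal vertex of $C'$, and the lifting $\Z^{n-1}\to\Z^n$ that adjoins to each primitive direction a uniquely determined integer last coordinate is unimodular, so it preserves the lattice indices entering the vertex multiplicity; the vertex at which the $U_n$-elevator is attached only picks up the extra primitive weight-one direction $U_n$, which is again a unimodular adjustment. Hence local contributions match vertex by vertex. The main obstacle will be precisely this last step, in particular handling the case where $p$ lies in the interior of an edge of $C'$ so that $C$ acquires a four-valent vertex at $p$, and verifying that the combinatorial vertex-multiplicity formula behaves correctly in that situation.
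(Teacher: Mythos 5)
Your bijection $\Phi$ is essentially the correspondence the paper itself describes in the paragraph preceding the statement: project $f$ to get $f'$, and lift $f'$ by placing the unique elevator over the point $f'(C')\cap\widetilde L'_{k-1}$. Your bookkeeping of which constraint is met by the floor and which by the elevator is the right use of Proposition \ref{floor horizontal}, and your claim that a degree-$1$ curve transverse at infinity has no bounded elevator is correct (a bounded edge separates the ends into two sets of size at least $2$, and $\sum_{i\in S}U_i$ is vertical only if $S$ or its complement is $\{U_n\}$), even though the balancing argument you sketch for it is looser than that. Be aware, though, that the paper does not prove this proposition: it is imported from \cite{Br7}, \cite{Br6b}, \cite{Br6}, and the surrounding text only asserts that ``one can show that this bijection respects multiplicity.'' So the substance of your proposal is whether your multiplicity argument closes that gap, and it does not.

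The difficulty is that the multiplicity $\mu(f)$ of Theorem \ref{corr}, for rational curves in $\R^n$ with $n\ge 3$ constrained by linear subspaces of various codimensions, is \emph{not} a product of local vertex multiplicities; that formula is specific to plane curves through points. In the framework of \cite{Mik08} and \cite{NS}, $\mu(f)$ is a global lattice index -- the determinant of an evaluation map defined on the deformation space of the combinatorial type of $f$ -- and there is no well-defined ``lattice index at a trivalent vertex'' of a curve in $\R^n$, $n\ge 3$: the relevant datum is how varying the edge lengths and the position of $C$ moves the intersection points inside the constraints $L_j$, which is not local at a vertex. What actually has to be shown is that $\pi$ identifies the deformation space of $f$, together with its evaluation maps into the walls and floors of the $L_j$, with the deformation space of $f'$ (evaluating into $\widetilde L'_{k-1}$, $\widehat L'_k$, $L'_{k+1},\dots,L'_\gamma$) plus one extra vertical translation parameter, and that this identification is unimodular and intertwines the evaluation maps. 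That is precisely the content of the Brugall\'e--Mikhalkin argument in the cited references, and it is exactly the step you defer. (Your stated worry about a four-valent vertex at $p$ is, by contrast, a non-issue: attaching an end at an interior point of an edge of $C'$ creates a trivalent vertex; a four-valent vertex would only occur if $p$ landed on a vertex of $C'$, which is excluded by genericity.) As written, the proposal establishes the bijection of sets but not the equality of multiplicities, which is the nontrivial half of the statement.
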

 Note that
$\T \mathcal
C(\omega'^{(k)})=\emptyset$ if $k=1$ or  $\sum_{j=1}^{k-1} (l_j-1)>n$.
Proposition \ref{floor d=1} allows one to
 compute all the numbers $N_{1,n}$ out of the numbers
$N_{1,n-1}$. Since it is trivial that $N_{1,2}(2)=1$, all 
the numbers $N_{1,n}$ can be computed using Proposition \ref{floor d=1}.

\begin{exa}\label{exa7}
Let $\omega=\{L_1,L_2,L_3,L_4\}$ be a $(1,3)$-decomposing configuration of 4
lines in $\R^3$ with $L_{j+1}\gg L_{j}$.  The set $\T \mathcal C(\omega)^{(k)}$ is non-empty
only for $k=2,3$, and
the corresponding projected configurations  in $\R^2$ are 
$\omega'^{(2)}=\{\pi(L_1), \pi(L_2),\pi(W_3),\pi(W_4)\}$ and
$\omega'^{(3)}=\{\pi(L_1)\cap\pi(L_2), \pi(L_3),\pi(W_4)\}$
(see Figure \ref{fig7}a).
Since there exists only one line passing through two points in the
plane, we get that $N_{1,3}(2,2,2,2)=1+1=2$.

To depict tropical curves passing through a decomposing configuration,
we use the following convention: a floor (resp. elevator) of the curve
is represented by an ellipse (resp. a vertical edge); a constraint
intersecting a floor (resp. elevator) is depicted by a dotted segment
intersecting the corresponding ellipse (resp. vertical edge); a
constraint is represented by a horizontal (resp. vertical) segment if
it is a horizontal (resp. vertical) constraint for the curve.    

For example, the two tropical lines passing through the four lines
$L_1,\ldots, L_4$ are represented by the  diagrams depicted in
Figures \ref{fig7}b and c.                                                                                                                                                                                                                                                                                                          
\begin{figure}[h]
\begin{tabular}{cccc}
\includegraphics[height=4cm, angle=0]{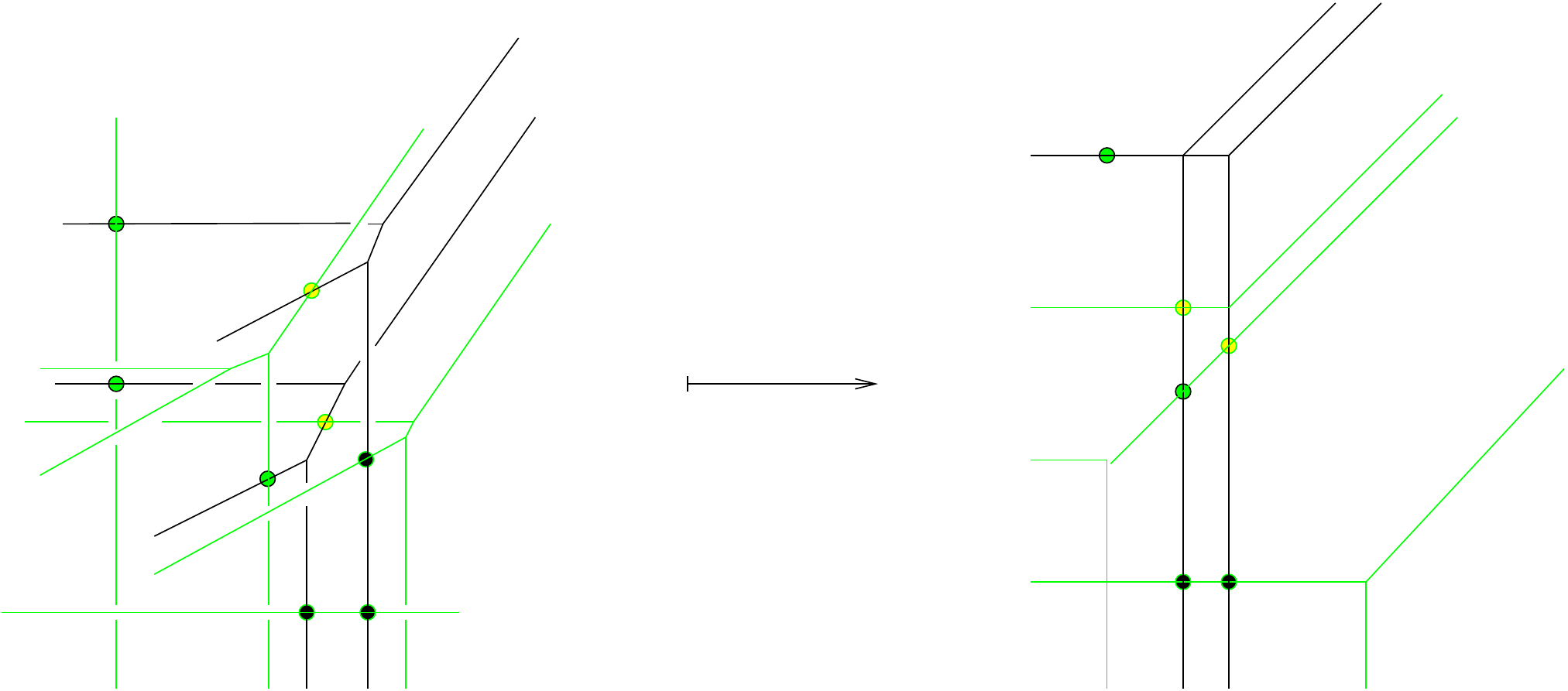}&
\input{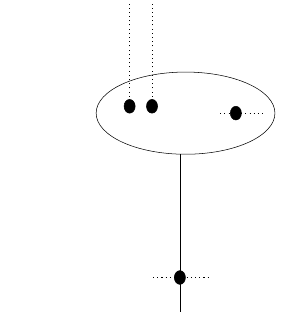_t}& \hspace{3ex} &
\input{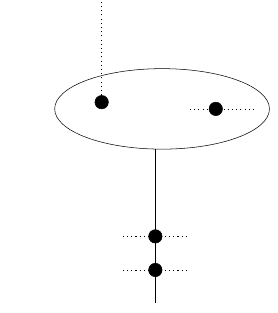_t}

\\ \\ a) & b) Line in $ \T \mathcal C(\omega)^{(2)}$ &&  c) Line in $ \T
\mathcal C(\omega)^{(3)}$
\end{tabular}
\caption{Floor decomposition technique to compute $N_{1,3}(2,2,2,2)=2$}\label{fig7}
\end{figure}
\end{exa}

\begin{coro}\label{rec line}
Given $l_1,\ldots,l_\gamma\ge 1$ such that $\sum_{j=1}^\gamma(l_j-1)=2n-2$, we have 
$$N_{1,n}(l_1,\ldots,l_\gamma)=\sum_{k=2}^{\gamma}
N_{1,n-1}\left(\sum_{i=1}^{k-1} (l_i-1),l_k-1,l_{k+1},\ldots,l_{\gamma}\right). $$
\end{coro}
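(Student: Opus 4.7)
The plan is to chain together the Correspondence Theorem and Proposition \ref{floor d=1}, with Proposition \ref{floor horizontal} providing the partition that turns a single enumerative count into a sum indexed by the horizontal constraint met by the unique floor.

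First I would fix a $(1,n)$-decomposing generic configuration $\omega=\{L_1,\ldots,L_\gamma\}$ in $\R^n$, ordered so that $L_{j+1}\gg L_j$, with $\codim L_j=l_j$; such a configuration exists since being $(1,n)$-decomposing is an open condition on the relative last coordinates of the vertices. Theorem \ref{corr} gives
$$N_{1,n}(l_1,\ldots,l_\gamma)=\sum_{f\in \T\mathcal C(\omega)}\mu(f).$$
Since a degree-$1$ tropical morphism has exactly one floor, Proposition \ref{floor horizontal} says this floor meets exactly one horizontal constraint $L_k$, producing the disjoint decomposition
$$\T\mathcal C(\omega)=\bigsqcup_{k=1}^{\gamma}\T\mathcal C(\omega)^{(k)}.$$

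Next I would apply Proposition \ref{floor d=1} to each piece, obtaining
$$N_{1,n}(l_1,\ldots,l_\gamma)=\sum_{k=1}^{\gamma}\sum_{f'\in\T\mathcal C(\omega'^{(k)})}\mu(f'),$$
where $\omega'^{(k)}=\{\widetilde L'_{k-1},\widehat L'_k,L'_{k+1},\ldots,L'_\gamma\}$. The $k=1$ term vanishes because $\widetilde L'_0$ is empty (the floor has no constraint below it to fix the elevator's location), so the sum effectively starts at $k=2$, matching the right-hand side of the corollary.

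The final step is to recognize each of the remaining summands as a Gromov-Witten count via Theorem \ref{corr} applied in $\R^{n-1}$. For this I must check two things: (i) the codimensions in $\omega'^{(k)}$ are respectively $\sum_{i=1}^{k-1}(l_i-1)$, $l_k-1$, $l_{k+1},\ldots,l_\gamma$ (immediate from the definitions of $\widetilde L'$, $\widehat L'$, $L'$ and the facts recorded just after Definition \ref{wall}); and (ii) these codimensions sum, after subtracting $1$ from each, to $2(n-1)-2=2n-4$, which follows by direct computation from the original relation $\sum_{j=1}^\gamma(l_j-1)=2n-2$. The only remaining (and essentially the only substantive) point is genericity of $\omega'^{(k)}$ in $\R^{n-1}$: this is the main obstacle, since a priori the projection $\pi$ could destroy transversality. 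However, for generic $\omega$ the positions of the vertices of the projected linear spaces are generic in $\R^{n-1}$, so $\omega'^{(k)}$ is generic for all $k$ simultaneously; this is the standard argument underlying Proposition \ref{floor d=1}. Once this is granted, Theorem \ref{corr} identifies each inner sum with $N_{1,n-1}\!\left(\sum_{i=1}^{k-1}(l_i-1),l_k-1,l_{k+1},\ldots,l_\gamma\right)$, and the corollary follows.
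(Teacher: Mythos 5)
Your proposal is correct and follows essentially the same route as the paper: the paper obtains this corollary directly from the chain Theorem \ref{corr} $\to$ Proposition \ref{floor horizontal} (partition by the horizontal constraint of the unique floor) $\to$ Proposition \ref{floor d=1} $\to$ Theorem \ref{corr} in $\R^{n-1}$, with the $k=1$ term discarded exactly as you argue. The codimension bookkeeping and the genericity of the projected configurations are likewise the points the paper delegates to the cited results of Brugall\'e--Mikhalkin, so nothing is missing.
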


\begin{exa}
Let us  compute the numbers
$C(n,l)=N_{1,n}(l,l_1, \ldots,l_{2n-1-l})$ with $2\le l\le n$ and 
$l_1=\ldots=l_{2n-1-l}=2$.  According to 
 Corollary \ref{rec line},  for any $n\ge 2$ and
$3\le l\le n$ we get 
\begin{equation}\label{catalan}
C(n,l)=C(n-1,l-1) + C(n,l+1).
\end{equation}
Hence we can extend the definition of the numbers 
$C(n,l)$ for all pairs $(n,l)$ with $n\ge 1$ according to
relation (\ref{catalan}), which is a Pascal type relation. The sequence
$$A(n,l)=\left(\begin{array}{c} 2n- l -1 \\ n-1\end{array} \right) -
\left(\begin{array}{c} 2n- l -1 \\ n\end{array} \right)$$
also satisfies relation (\ref{catalan}), and we have
$$\forall n\ge 1 \quad C(n,0)=A(n,0)=0\quad \text{and}\quad
C(n,n)=A(n,n)=1 $$
so these two sequences must be equal on the set 
$\{(n,l)\in\Z^2\ | \  n\ge 1\}$. Hence we get
\begin{equation}\label{rec}
 \forall n\ge 2,\quad \forall l\in\{2,\ldots,n\}, \quad
C(n,l)= \left(\begin{array}{c} 2n- l -1 \\ n-1\end{array} \right) -
\left(\begin{array}{c} 2n- l -1 \\ n\end{array} \right).
\end{equation}
In particular, we find again the Catalan numbers
$$C(n,2)= C(n,1)=\frac{1}{n}\left(\begin{array}{c} 2n- 2
  \\ n-1\end{array} \right) .$$
Note that Corollary \ref{rec line} and the relation \ref{rec}
almost immediatly imply that
$$\forall n\ge 2,\quad \forall k,l\in\{2,\ldots,n\}, \quad
N_{1,n}(k,l,l_1, \ldots,l_{2n-k-l})= 
\left(\begin{array}{c} 2n- l -k \\ n-k\end{array} \right) -
\left(\begin{array}{c} 2n- l -k \\ n\end{array} \right) $$
where $l_1=\ldots=l_{2n-k-l}=2$. 
\end{exa}

\subsection{The case $d=2$}\label{d=2}
Let us suppose that $d=2$, so that 
$\sum_{j=1}^{\gamma}(l_j-1)=3n-1$. We choose a $(2,n)$-decomposing configuration
$\omega=\{L_1,\ldots,L_\gamma\}$ of complete tropical linear spaces
in $\R^n$, $n\ge 3$, such that $L_j$ has codimension $l_j\ge 2$ and $L_{j+1}$ is
higher than $L_j$ for $j=1,\ldots,\gamma-1$. As in section \ref{d=1}, we
denote by $W_k$ the wall
of the constraint $L_k$.
Given $f$ an element of $\T \mathcal C(\omega)$,
either $f$ has one floor of degree 2, or it has 2 floors of degree
1.

\vspace{1ex}
Let us first deal with the case of a conic with one floor of degree 2.
Let $k$ be an integer in $\{1,\ldots,\gamma\}$, and $A\sqcup B$ a
partition of $\{1,\ldots,k-1\}$ into two 
sets. 
We denote by $\T \mathcal C(\omega)^{(k,A,B)}$ the set of 
all tropical morphisms in 
$\T \mathcal C(\omega)$ with one floor $\F$ of degree 2  such that\\
\begin{tabular}{p{0.8\textwidth}p{0.2\textwidth}}
\begin{itemize}
\item the floor $\F$ meets the
 horizontal constraint
$L_k$;
\item one of the two elevators of $f$ meets all the constraints $L_j$
  with $j\in A$, while the other elevator meets all the constraints $L_j$
  with $j\in B$.
\end{itemize} & \begin{center}
\input{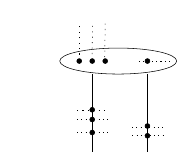_t}\end{center}
\end{tabular}

According to Proposition \ref{floor horizontal}, the set of all
tropical morphisms in $\T \mathcal C(\omega)$ with a single floor is
equal to
$$\bigsqcup_{k=1}^{\gamma}\quad \bigsqcup_{A\sqcup B=\{1,\ldots,k-1\}}  \T
\mathcal C(\omega)^{(k,A,B)}.$$
We define $\omega'^{(k,A,B)}=\{\widetilde L'_A,\widetilde L'_B,\widehat L'_k, L'_{k+1},\ldots, L'_\gamma\}$.
The complete tropical linear space $L'_k$ has codimension $l_k$, 
the space $\widehat L'_k$ has codimension $l_k-1$, 
and $\widetilde L'_A$ (resp. $\widetilde L'_B$ )
has codimension $\sum_{j\in A} (l_j-1)$ (resp. $\sum_{j\in B}
(l_j-1)$)
if non-empty.
As in section \ref{d=1}, there is natural map
$$\phi_{(k,A,B)}:\T \mathcal C(\omega)^{(k,A,B)}\to 
\T \mathcal C(\omega'^{(k,A,B)}).$$
Contrary to section \ref{d=1}, the map $\phi_{(k,A,B)}$ is injective and
respects the multiplicity if and only if 
$$\codim \widetilde L'_A\ge 2,\quad 
\codim \widetilde L'_B\ge 2,\quad \text{and}\quad  
\codim \widehat L'_k\ge 2.$$

In general, given $f'\in \T \mathcal C(\omega'^{(k,A,B)})$ we have
$$\sum_{f\in\phi_{(k,A,B)}^{-1}(f')}\mu(f)=2^{m_{(k,A,B)}}\mu(f'). $$
where $m_{(k,A,B)}$ is the number of spaces in 
$\{\widetilde L'_A,\widetilde L'_B, \widehat L'_k\}$ of
codimension 1. The factor $2^{m_{(k,A,B)}}$ is just the manifestation
of the fact  that a conic in
the projective space intersect a hyperplane into two points (counted
with multiplicity).
Altogether we hence have the following Proposition.

\begin{prop}[Brugallé-Mikhalkin \cite{Br7}, 
\cite{Br6b}, \cite{Br6}]\label{floor d=2 1}
Given any $k,A,$ and $B$ as above, we have
$$\sum_{f\in \T \mathcal C(\omega)^{(k,A,B)}} \mu(f) =
2^{m_{(k,A,B)}}\sum_{f'\in \T \mathcal C(\omega'^{(k,A,B)})}\mu(f').$$
\end{prop}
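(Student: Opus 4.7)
The plan is to analyze the natural projection map $\phi_{(k,A,B)}$ directly. Given $f : C \to \R^n$ in $\T \mathcal C(\omega)^{(k,A,B)}$, the unique floor $\F$ has degree $2$, and the two elevators $e_A, e_B$ are the ends of $C$ in direction $U_n = (0,\ldots,0,-1)$, attached to $\F$ at finite points $p_A, p_B$. I would set $f' = \pi \circ f_{|\F}$, smoothing the newly $2$-valent vertices appearing at $p_A, p_B$; this yields a minimal rational tropical morphism of degree $2$ in $\R^{n-1}$. To verify $f' \in \T \mathcal C(\omega'^{(k,A,B)})$, observe that the vertical elevator $e_A$ projects to the single point $\pi(f(p_A))$, which must lie in every $\pi(L_j) = \widehat L'_j$ for $j \in A$, hence in $\widetilde L'_A$; symmetrically $\pi(f(p_B)) \in \widetilde L'_B$. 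Since $L_k$ is a horizontal constraint for $\F$, the point $f(\F) \cap L_k$ lies in the floor of $L_k$ and projects to a point of $\widehat L'_k$. Finally, each constraint $L_j$ with $j>k$ is vertical for $\F$ (it sits much higher than $\F$), so $\F$ meets it in the wall $W_j$, whose projection is $L'_j$.

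Next I would count $|\phi_{(k,A,B)}^{-1}(f')|$. Reconstructing $f$ from $f'$ amounts to specifying a height for the floor so that the lifted $\F$ meets $L_k$ in its floor, together with the two attachment points of $e_A$ and $e_B$ on $\F$, constrained to project into $\widetilde L'_A$ and $\widetilde L'_B$ respectively. By tropical B\'ezout, a tropical conic in $\R^{n-1}$ meets a generic tropical hyperplane in exactly $2$ points, while it meets a generic tropical linear space of codimension $\ge 2$ (on which $f'(\F)$ has already been forced to have a point) in a unique point. Consequently each of the three constraints $\widetilde L'_A$, $\widetilde L'_B$, $\widehat L'_k$ of codimension $1$ contributes a factor of $2$ to the number of lifts, while those of codimension $\ge 2$ contribute a factor of $1$. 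This gives $|\phi_{(k,A,B)}^{-1}(f')| = 2^{m_{(k,A,B)}}$.

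Finally I would show that $\mu(f) = \mu(f')$ for every $f \in \phi_{(k,A,B)}^{-1}(f')$. The tropical multiplicity factorizes as a product of local contributions at the trivalent vertices of $C$. Vertices of $\F$ distinct from $p_A, p_B$ have a local picture in $f$ that projects isomorphically onto the local picture in $f'$, so their contributions agree. At an attachment vertex $p_A$, the three primitive adjacent directions are $U_n$ together with the two tangent directions $u_1, u_2$ of $f(\F)$ at $p_A$; these span the same lattice as the tangent directions of $f'(\F)$ at the smoothed point augmented by $U_n$, and a direct lattice-index computation, combined with the balancing condition, shows the new vertex contributes a factor of $1$. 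The same holds at $p_B$. Summing over the fiber gives the desired identity.

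The main obstacle will be this last multiplicity calculation, which hinges on the precise lattice-index formula for tropical morphisms. Although conceptually clean, writing out the indices carefully in arbitrary $n$ and for all three possible codimension-$1$ cases demands the formalism of the floor-decomposition framework developed in \cite{Br7}, \cite{Br6b}, \cite{Br6}; once that machinery is invoked, Proposition \ref{floor d=2 1} follows immediately from Steps 1--3 above.
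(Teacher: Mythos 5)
Your overall strategy --- project along $\pi$, identify the image in $\T \mathcal C(\omega'^{(k,A,B)})$, and count the fiber of $\phi_{(k,A,B)}$ --- is exactly the route the paper sketches. But your middle step contains a genuine error. You assert that ``by tropical B\'ezout, a tropical conic in $\R^{n-1}$ meets a generic tropical hyperplane in exactly $2$ points,'' and conclude that $|\phi_{(k,A,B)}^{-1}(f')|=2^{m_{(k,A,B)}}$ with $\mu(f)=\mu(f')$ for every lift. Tropical B\'ezout only gives two intersection points \emph{counted with multiplicity}. The conic $f'(C')$ is rigid (it is pinned down by the projected configuration), and the hyperplane $\widetilde L'_A$ is one of the constraints it is forced to meet, so you cannot perturb either of them independently to achieve transversality: the intersection may well consist of a single point of tropical intersection multiplicity $2$. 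In that case the fiber over $f'$ contains strictly fewer than $2^{m_{(k,A,B)}}$ morphisms, and the ``missing'' lifts are compensated by lifts of multiplicity $2\mu(f')$. This is precisely why the Proposition is stated as an identity between \emph{weighted} sums, $\sum_{f\in\phi_{(k,A,B)}^{-1}(f')}\mu(f)=2^{m_{(k,A,B)}}\mu(f')$, rather than as a set-theoretic count; a correct proof must track the local intersection multiplicity of $f'(C')$ with each codimension-one constraint and show that it is absorbed into $\mu(f)$.

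The stronger statement you prove cannot be correct as stated, because it would make the rest of the paper's argument collapse: if every $(2,n)$-decomposing configuration yielded $2^{m_{(k,A,B)}}$ distinct lifts each of multiplicity $\mu(f')$, then the well-ordered hypothesis in Proposition~\ref{conics} would be superfluous, whereas the paper explicitly remarks that $(2,n)$-totally decomposing configurations are \emph{not} necessarily maximal, and devotes the one-floor part of the proof of Proposition~\ref{conics} (and of the $8$-lines example in Section~\ref{conics8lines}) to verifying, using the well-ordered hypothesis, that each of the $m_{(k,A,B)}$ hyperplanes really does meet the conic in two \emph{distinct} points --- exactly the transversality you are assuming for free. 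Your last step ($\mu(f)=\mu(f')$ via a lattice-index computation at the attachment vertices) fails for the same reason: at a degenerate attachment point the local index is $2$, not $1$. The skeleton of your argument is right; what is missing is the case analysis (or the general multiplicativity statement) handling non-transverse intersections of $f'(C')$ with $\widetilde L'_A$, $\widetilde L'_B$, and $\widehat L'_k$.
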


\vspace{1ex}
We treat now the case of tropical morphisms $f$ in 
$\T \mathcal C(\omega)$ with two floors of degree 1. 
Let $k_2<k_1$ be two integers in $\{1,\ldots,\gamma\}$, 
$A \sqcup B$ be a partition of $\{1,\ldots, k_1-1\}$, $D$
a subset of $\{k_1+1,\ldots,k_2-1\}$,
and $C_1\sqcup C_2$ be a partition of $\{k_1+1,\ldots,\gamma\}\setminus\left(\{k_2\}\cup D\right)$.

We denote by $\T \mathcal C(\omega)^{(k_1,k_2,A,B,C_1,C_2,D)}$ the set of 
 tropical morphisms in 
$\T \mathcal C(\omega)$ with two floors $\F_1$ and $\F_2$ of degree 1
  such that\\
\begin{tabular}{p{0.8\textwidth}p{0.2\textwidth}}
\begin{itemize}
\item the floor $\F_i$ meets the constraint  
$L_{k_i}$, and all the constraints $L_j$ with $j\in C_i$;
\item one of the two unbounded elevators of $f$ meets all the constraints $L_j$
  with $j\in A$, while the other unbounded 
elevator meets all the constraints $L_j$
  with $j\in B$;
\item the bounded elevator of $f$ meets all the constraints $L_j$
  with $j\in D$.\end{itemize} & \begin{center}\input{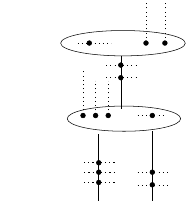_t}\end{center}
\end{tabular}

According to Proposition \ref{floor horizontal}, the set of all
tropical morphisms in $\T \mathcal C(\omega)$ with  two floors is
equal to
$$\bigsqcup_{1\le k_1<k_2\le \gamma}\quad \bigsqcup_{A,B,C_1,C_2,D}  \T
\mathcal C(\omega)^{(k_1,k_2,A,B,C_1,C_2,D)}.$$
Given $i=1,2$, we define the following integers
$$\begin{array}{lll}
l'^i_j=l_j\ \text{if }j\in C_i \quad &
l'_A=\sum_{j\in A}(l_j-1) \quad & 
l'^1_0=2n-6 -  \sum_{j\in C_1}(l'_j-1) -l'_A -
l'_B-l'_{k_1}
\\ \\ l'_{k_i}= l_{k_i}-1
&   l'_B=\sum_{j\in B}(l_j-1)
 &
l'^2_0=2n-4 -  \sum_{j \in C_2}(l'_j-1) -l'_{k_2}
\end{array}$$
Finally, we denote by $\gamma_i$ the cardinal of the set $C_i$.

\begin{prop}[Brugallé-Mikhalkin \cite{Br7}, 
\cite{Br6b}, \cite{Br6}]\label{floor d=2 2}
Given $k_1,k_2,A,B,C_1,C_2,$ and $D$ as above, we have
$$\sum_{f\in \T \mathcal C(\omega)^{(k_1,k_2,A,B,C_1,C_2,D)}} \mu(f)=
N_{1,n-1}(l'^1_{i_1},\ldots, l'^1_{i_{\gamma_1}},l'_A,l'_B, l'_{k_1}, 
 l'^1_0)
N_{1,n-1}(l'^2_{j_1},\ldots, l'^2_{j_{\gamma_2}}, l'_{k_2}, 
 l'^2_0).$$
\end{prop}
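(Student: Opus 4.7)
The plan is to project via $\pi : \R^n \to \R^{n-1}$ and show that the elements of $\T\mathcal C(\omega)^{(k_1,k_2,A,B,C_1,C_2,D)}$ are in multiplicity-preserving correspondence with pairs of tropical lines in $\R^{n-1}$ meeting at a single point of $\widetilde L'_D$. First I would pin down the combinatorial shape of each such $f$: a balancing computation around each floor, together with the fact that the conic has exactly two ends in direction $U_n$ and that $C$ is a tree, forces the lower floor to carry both unbounded elevators (those meeting the constraints indexed by $A$ and $B$) and one end of the bounded elevator, while the upper floor carries only the other end of the bounded elevator. The minimal tropical lines $f'_i : C'_i \to \R^{n-1}$ induced by $\pi\circ f|_{\F_i}$ then encode the two ``projected halves'' of $f$.

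Next I would read off the constraints satisfied by $f'_1$ and $f'_2$. The horizontal constraint $L_{k_i}$ forces $f'_i$ to pass through $\widehat L'_{k_i}$ of codimension $l'_{k_i}$, and every vertical constraint $L_j$ with $j \in C_i$ forces $f'_i$ to pass through $L'_j$ of codimension $l'^i_j$. Each unbounded elevator of $f$ projects to a point of $f'_1(C'_1)$ that must lie in $\widetilde L'_A$ or $\widetilde L'_B$ respectively, and the bounded elevator projects to a single point of $\R^{n-1}$ belonging simultaneously to $f'_1(C'_1)$, $f'_2(C'_2)$ and $\widetilde L'_D$. This is exactly the tropical analogue of the reducible-conic problem of Lemma \ref{complex red}, with node constraint $\widetilde L'_D$ and the two lists of constraints distributed between $f'_1$ and $f'_2$.

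Following the proof of Lemma \ref{complex red}, I would introduce the tropical ``line varieties'' $V_i$ swept out by the tropical lines in $\R^{n-1}$ that satisfy the fixed constraints of $f'_i$. The weighted cardinality of the set of admissible pairs $(f'_1,f'_2)$ then equals $\deg V_1 \cdot \deg V_2$, and each $\deg V_i$ is by definition an $N_{1,n-1}$ invariant with one additional codimension argument equal to $\dim V_i$. A direct bookkeeping of dimensions, using $\sum_j (l_j-1)=3n-1$ and the explicit expressions for $l'^1_0$ and $l'^2_0$, identifies this extra argument with $l'^i_0$ and produces the stated product.

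The main technical obstacle is to verify that the correspondence $f \mapsto (f'_1, f'_2)$ preserves the tropical multiplicity $\mu(f)$. Because $\omega$ is $(2,n)$-decomposing, each elevator attachment of $f$ is uniquely reconstructed from the corresponding projected intersection point, so each elevator contributes a factor $1$ to $\mu(f)$; the rest of $\mu(f)$ factors as the product of the tropical multiplicities of $f'_1$ and $f'_2$. Lemma \ref{generic red} ensures that the reducible pairs in $\R^{n-1}$ we are counting are transverse, so no additional multiplicity correction enters. This is the same multiplicity analysis that already underlies Propositions \ref{floor d=1} and \ref{floor d=2 1}.
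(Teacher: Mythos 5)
Your proposal follows essentially the same route as the paper: project to $\R^{n-1}$, identify $\T \mathcal C(\omega)^{(k_1,k_2,A,B,C_1,C_2,D)}$ bijectively with the set of reducible tropical conics with node on $\widetilde L'_D$ and constraints distributed into $\omega'^1$ and $\omega'^2$, and conclude by the product formula of Lemma \ref{complex red}. The only cosmetic difference is that you rerun the line-variety degree argument tropically, whereas the paper invokes the complex statement of Lemma \ref{complex red} directly; both arguments are given at the same heuristic level of detail as the paper's own sketch.
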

Let us give  a heuristic of the proof of Proposition \ref{floor d=2 2}.
We define $\omega'^1=\{\widetilde L'_A,
\widetilde L'_B,\widehat L'_{k_1},L'_j: j \in C_1\}$,  
$\omega'^2=\{\widehat L'_{k_2},L'_j: j \in C_2\}$, and 
$\omega^{(k_1,k_2,A,B,C_1,C_2,D)}=(\widetilde L'_D,\omega'^1,\omega'^2)$.
As in section \ref{d=1}, 
there is a natural and bijective map
$$\phi:\T \mathcal C(\omega)^{(k_1,k_2,A,B,C_1,C_2,D)}\to 
\T \mathcal C_{red}(\omega^{(k_1,k_2,A,B,C_1,C_2,D)})$$
and Proposition \ref{floor d=2 2}  now follows from
Lemma \ref{complex red}.

\vspace{1ex}
Once again, since it is trivial that $N_{2,1}(2)=1$ and $N_{2,2}(5)=1$, all 
the numbers $N_{n,2}$ can  be computed inductively using Propositions
\ref{floor d=1}, \ref{floor d=2 1}, and \ref{floor d=2 2}.

\begin{exa}(see \cite{Br7}) In Figure  \ref{n=3,d=2}, we depict all possible 
floor decompositions for tropical conics in $\R^3$ passing through a
$(2,3)$-decomposing configuration 
of 8
tropical lines. In each case, we precise the number of $(k_1,k_2,A,B,C_1,C_2,D)$ or 
$(k,A,B)$ with a non-empty corresponding set of tropical morphisms, and the sum
of the multiplicities  of the
corresponding curves in $\T \mathcal
C(\omega)^{(k_1,k_2,A,B,C_1,C_2,D)} $ or 
$\T \mathcal C(\omega)^{(k,A,B)}$ for each such choice.
\end{exa}
\begin{figure}[h]
\centering
\begin{tabular}{ccccccccccccc}
\includegraphics[width=1.0cm, angle=0]{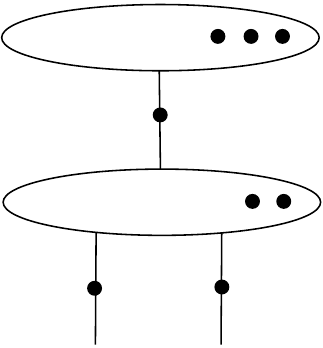}&
 \hspace{2ex}  &
\includegraphics[width=1.0cm, angle=0]{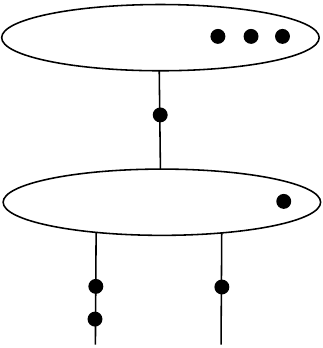}&
 \hspace{2ex}  &
\includegraphics[width=1.0cm, angle=0]{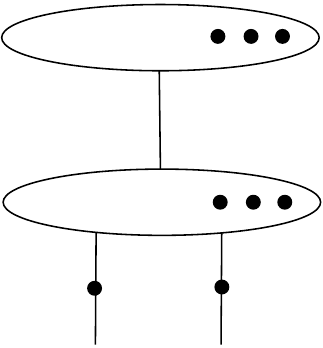}&
 \hspace{2ex}  &
\includegraphics[width=1.0cm, angle=0]{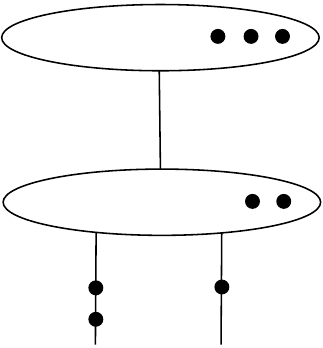}&
 \hspace{2ex}  &
\includegraphics[width=1.0cm, angle=0]{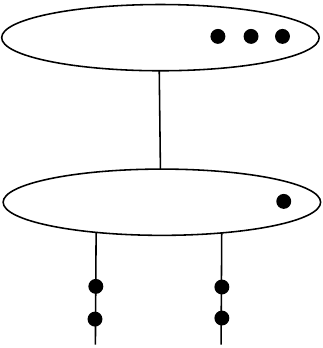}&
 \hspace{2ex}  &
\includegraphics[width=1.0cm, angle=0]{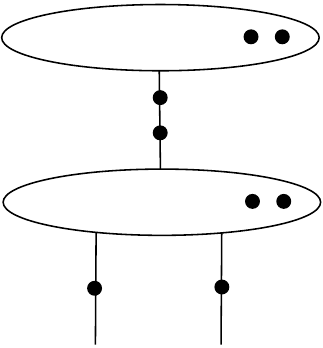}&
 \hspace{2ex}  &
\includegraphics[width=1.0cm, angle=0]{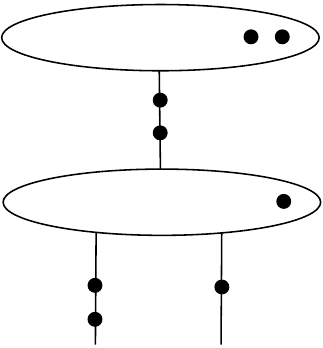}
\\  5, $\mu =1$ &&   3, $\mu =1$
  &&   10, $\mu =1$
&&12,  $\mu =1$
&&3,  $\mu=1$
&&5,  $\mu=1$
&&3,  $\mu=1$
\end{tabular}

\vspace{2ex}

\begin{tabular}{ccccccccccc}
\includegraphics[width=1.0cm, angle=0]{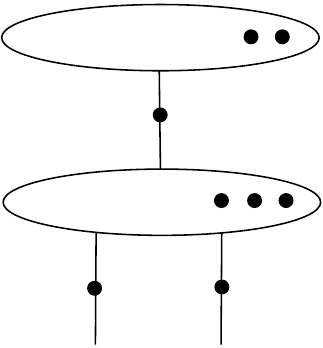}&
 \hspace{2ex}  &
\includegraphics[width=1.0cm, angle=0]{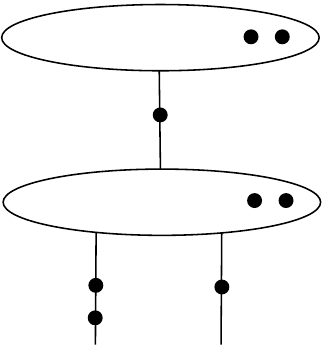}&
 \hspace{2ex}  &
\includegraphics[width=1.0cm, angle=0]{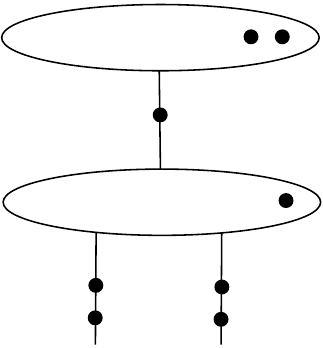}&
 \hspace{2ex}  &
\includegraphics[width=1.0cm, angle=0]{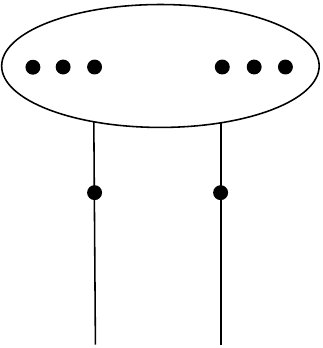}&
 \hspace{2ex}  &
\includegraphics[width=1.0cm, angle=0]{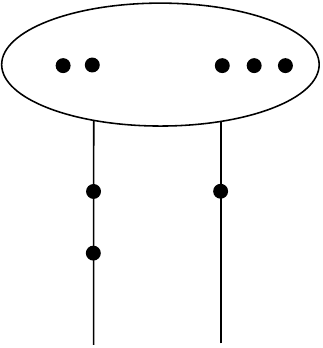}&
 \hspace{2ex}  &
\includegraphics[width=1.0cm, angle=0]{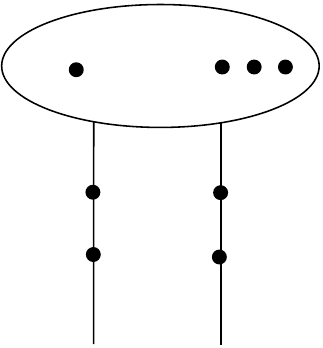}
\\  10, $\mu=1$ &&   12, $\mu=1$
  &&   3, $\mu=1$
&&1,  $\mu=8$
&&3,  $\mu=4$
&&3,  $\mu=2$
\end{tabular}
\caption{Floor decompositions of the 92 tropical conics passing through 8 lines in $\R^3$}
\label{n=3,d=2}
\end{figure}

\section{Maximal configurations for conics}\label{section: proof}

\subsection{Well-ordered totally decomposing configurations}
To prove Theorem \ref{main}, we exhibit \textit{maximal}
configurations, that is configurations $\omega$ of complete
tropical linear spaces such that the cardinal of the set 
$|\T\mathcal C(\omega)| $ is equal to the corresponding Gromov-Witten
invariant.

In this section, we define \textit{well-ordered totally decomposing
  configurations}. The rest of the paper will be devoted to show that any
such configuration is maximal when dealing with conics.

\begin{defn}\label{tot dec}
Let  $\omega= \{L_1,\dots,L_{\gamma}\}$ be a $(d,n)$-decomposing
configuration of complete
tropical linear spaces in $\R^n$, and let $W_i$ be the wall of $L_i$.  

We say that
$\omega$ is a \textit{$(d,n)$-totally decomposing configuration} if  it 
satisfies  one of the two following conditions 
\begin{itemize}
\item $n=2$;
\item for any subset $\Gamma\subset\{1,\ldots,\gamma \}$ the
  configuration $\{\pi(L_i),\ \pi(W_j)\ :\ i\in\Gamma, j\notin\Gamma\}$
 is $(d,n-1)$-totally decomposing.
\end{itemize}
\end{defn}

Since a hyperplane in $\R^n$ has a single vertex, the existence of
totally decomposing configurations of tropical hyperplanes is
straightforward.
Now suppose that we want to construct a totally decomposing configuration
$\omega= \{L_1,\dots,L_{\gamma}\}$ with $\codim L_i=l_i$. We start
with a  totally decomposing configuration of tropical hyperplanes
$\{H_1,\ldots, H_\gamma\}$, and we construct $L_i$ by intersecting
$l_i$ copies of $H_i$  translated along very small vectors.

\begin{rem}
Let $\{L_1,\dots,L_{\gamma}\}$ be a generic totally decomposing configuration
of complete tropical linear spaces,
$\Gamma\subset\{1,\ldots,\gamma \}$, and $\omega'$ the 
  configuration $\{\pi(L_i),\ \pi(W_j)\ :\ i\in\Gamma,
  j\notin\Gamma\}$. Then, it follows directly from Definition \ref{tot dec}
  that given any elements $\mathcal L_1,\ldots,\mathcal L_k$ of $\omega'$, the
  floor of the complete tropical linear spaces $\cap_{i=1}^k\mathcal
  L_k$ is contained in the floor of the lowest space among 
 $\mathcal L_1,\ldots,\mathcal L_k$ (see Figure \ref{conf_index}).
\begin{figure}[h]
\centering
\input{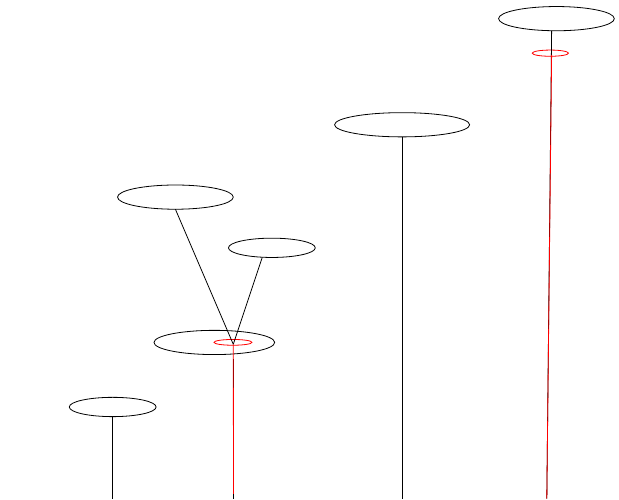_t}
\caption{$\omega=(L_1,\dots,L_6)$}
\label{conf_index}
\end{figure}
\end{rem}

\vspace{1ex}
Next lemma provides an alternate proof of Sottile's Theorem about
maximality of real enumerative problems concerning lines in projective
spaces. 
\begin{lem}\label{lines}
Let $\omega$ be a $(1,n)$-totally decomposing configuration of
complete tropical linear spaces in $\R^n$ subject to
equality (\ref{RR}) with $d=1$. Then $\omega$ is maximal.
\end{lem}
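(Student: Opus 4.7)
My plan is to prove the lemma by induction on $n \geq 2$, showing that $|\T\mathcal{C}(\omega)| = N_{1,n}(l_1,\ldots,l_\gamma)$ for every $(1,n)$-totally decomposing $\omega$. Maximality then follows formally: by Theorem \ref{corr} the multiplicities $\mu(f)$ are positive integers that sum to $N_{1,n}$, so the equality of cardinality and sum forces each $\mu(f) = 1$; consequently $\sum_{f\in \T\mathcal C(\omega)} [\mu(f)]_2 = N_{1,n}$, and Lemma \ref{real mod} produces a real configuration realizing all $N_{1,n}$ lines as real. The base case $n = 2$ is immediate: the constraint $\sum(l_j-1) = 2$ with $1 \le l_j \le 2$ forces $\gamma = 2$ and $l_1 = l_2 = 2$, and the unique tropical line through two generic points in $\R^2$ matches $N_{1,2}(2,2) = 1$.

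For the inductive step, I would invoke Proposition \ref{floor horizontal} to write $\T\mathcal{C}(\omega) = \bigsqcup_{k=1}^\gamma \T\mathcal{C}(\omega)^{(k)}$, and then apply Proposition \ref{floor d=1} (whose underlying bijection is multiplicity-preserving) to identify each piece with $\T\mathcal{C}(\omega'^{(k)})$ in $\R^{n-1}$, where $\omega'^{(k)} = \{\widetilde L'_{k-1}, \widehat L'_k, L'_{k+1}, \ldots, L'_\gamma\}$. Granting that the inductive hypothesis applies to each $\omega'^{(k)}$, one obtains
\[
\bigl|\T\mathcal{C}(\omega)^{(k)}\bigr| = \bigl|\T\mathcal{C}(\omega'^{(k)})\bigr| = N_{1,n-1}\Bigl(\textstyle\sum_{j<k}(l_j-1),\, l_k-1,\, l_{k+1},\, \ldots,\, l_\gamma\Bigr),
\]
and summing over $k$ while invoking Corollary \ref{rec line} yields $|\T\mathcal{C}(\omega)| = N_{1,n}(l_1,\ldots,l_\gamma)$, closing the induction.

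The delicate point that needs verification is that $\omega'^{(k)}$ is itself $(1,n-1)$-totally decomposing. Applying the recursive clause of Definition \ref{tot dec} to $\omega$ with $\Gamma = \{1,\ldots,k\}$ directly provides that $\{\pi(L_i) : i \le k\} \cup \{\pi(W_j) : j > k\}$ is $(1,n-1)$-totally decomposing. The configuration $\omega'^{(k)}$ differs from this only in that the $k-1$ lowest entries $\pi(L_1),\ldots,\pi(L_{k-1})$ are aggregated into their intersection $\widetilde L'_{k-1}$. Using the explicit construction of totally decomposing configurations outlined after Definition \ref{tot dec} (in which each $L_i$ arises as the intersection of $l_i$ small translates of a single tropical hyperplane $H_i$), $\widetilde L'_{k-1}$ is itself a generic complete tropical linear space of codimension $\sum_{j<k}(l_j-1)$ cut out by a sub-family of the same underlying projected hyperplanes, so the totally decomposing property is inherited by $\omega'^{(k)}$ by tracking these hyperplanes through the projection.

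I expect the last step to be the main obstacle: the definition of totally decomposing is phrased in terms of families of the form $\{\pi(L_i), \pi(W_j)\}$, and rigorously justifying that replacing several of these by a single intersection preserves both the decomposing inequalities $R(\omega'^{(k)}) \geq R_{\mathcal H}D(n-1,1)$ and the recursive clause of Definition \ref{tot dec} requires working explicitly with the hyperplane model of $\omega$. The rest of the argument is a formal combination of the floor-decomposition propositions, the recursion of Corollary \ref{rec line}, and Lemma \ref{real mod}.
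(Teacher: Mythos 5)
Your proposal is correct and follows essentially the same route as the paper: induction on $n$, the decomposition $\T\mathcal C(\omega)=\bigsqcup_k\T\mathcal C(\omega)^{(k)}$ from Proposition \ref{floor horizontal}, the multiplicity-preserving bijection of Proposition \ref{floor d=1}, and the observation that each projected configuration $\omega'^{(k)}$ is again $(1,n-1)$-totally decomposing, hence maximal by induction, forcing all multiplicities to equal $1$. The ``delicate point'' you flag (that aggregating $\pi(L_1),\ldots,\pi(L_{k-1})$ into $\widetilde L'_{k-1}$ preserves the totally decomposing property) is exactly what the paper disposes of via the remarks following Definitions \ref{tot dec} and \ref{well ord}, using the same hyperplane-intersection model you describe.
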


\begin{proof}
We prove the lemma by induction on $n$. Clearly, the lemma is true for
$n=2$. Suppose now that $n\ge 3$ and that the lemma is true in
dimension $n-1$. 
In what follows, we use the notation of section \ref{d=1}.
Any projected configuration
$\omega'^{(k)}$ in $\R^{n-1}$ is  $(1,n-1)$-totally decomposing, 
and so maximal by  induction hypothesis. Hence any
tropical morphism $f$ in $\T\mathcal{C}(\omega'^{(k)})$ has
multiplicity 1.
Since the two sets $\T\mathcal{C}(\omega)^{(k)}$ and 
$\T\mathcal{C}(\omega'^{(k)})$ have the same cardinal, 
we deduce from Proposition \ref{floor d=1} that
$$\sum_{f \in \T
\mathcal{C}(\omega)}\mu(f)=\sum_{k=2}^{\gamma}|\T\mathcal
C(\omega)^{(k)}|=|\T \mathcal{C}(\omega)|.$$
In other words, the configuration $\omega$ is maximal.  
\end{proof}

It turns out that $(2,n)$-totally decomposing configurations are not
necessarily maximal.

\begin{defn}\label{well ord}
Let $\omega=\{L_1,\dots,L_{\gamma}\}$ be a $(d,n)$-totally decomposing
configuration of complete tropical linear spaces in $\R^n$, and let $W_i$ be
the wall of $L_i$.
We say that $\omega$ is a well-ordered  $(d,n)$-totally decomposing
configuration if it satisfies one of the two following conditions 
\begin{itemize}
 \item  $n=2$;
 \item for any subset $\Gamma\subset\{1,\ldots,\gamma \}$ the
  configuration $\{\pi(L_i),\ \pi(W_j)\ :\ i\in\Gamma, j\notin\Gamma\}$
 is a well-ordered $(d,n-1)$-totally decomposing configuration; 
moreover for any $i$ and $j$ such that
 $L_i\gg L_j$, we have $\pi(L_i)\gg 
\pi(L_j)$ and $\pi(W_i)\gg
\pi(L_j)$ if $\codim \pi(L_j) \geq 1$, $\pi(L_i)\gg\pi(W_j)$ and $\pi(W_i)\gg\pi(W_j)$.
\end{itemize}
\end{defn}

Again it is trivial that well-ordered totally decomposing
configurations of hyperplanes exist, from which it follows 
that there exists a
 well ordered totally decomposing
configurations $\omega= \{L_1,\dots,L_{\gamma}\}$ with $\codim
L_i=l_i$ for any fixed positive integers $l_1,\dots,l_{\gamma}$.

\begin{rem}
Let $\{L_1,\dots,L_{\gamma}\}$ be a generic well-ordered 
totally decomposing configuration
of complete tropical linear spaces,
$\Gamma\subset\{1,\ldots,\gamma \}$, and $\omega'$ the 
  configuration $\{\pi(L_i),\ \pi(W_j)\ :\ i\in\Gamma,
  j\notin\Gamma\}$. Then, it follows directly from Definition
  \ref{well ord}
  that given any elements $\mathcal L_1,\ldots,\mathcal L_k$ of
  $\omega'$, 
the configuration obtained out of $\omega'$ by replacing 
$\mathcal L_1,\ldots,\mathcal L_k$ by $\cap_{i=1}^k\mathcal
  L_k$ is still
a well-ordered 
totally decomposing configuration (see Figure \ref{conf_index}, where $L_6\gg\dots\gg L_1$).
\end{rem}

We will prove in Proposition \ref{conics} that a well-ordered
$(2,n)$-totally decomposing configuration is maximal. 
We will treat the case of tropical conics with two floors of degree
one using the totally decomposing hypothesis more or less as in Lemma
\ref{lines} (see Proposition \ref{card red}), and we will
treat the case of tropical conics with
one floor of degree 2
using the well-ordered hypothesis.

Before proving Proposition \ref{conics} in its full generality, let us first
 illustrate on a simple example how the 
well-ordered hypothesis  solves the case 
of tropical conics with
one floor of degree 2.

\subsection{Conics through 8 lines in $\R^3$}\label{conics8lines} 
Maximality of the
real enumerative problem concerning conics passing through $8$ spatial
lines in general position in $\R P^3$
was first announced by Brugallé and Mikhalkin
in \cite{Br7}. 

Let us fix  a well-ordered $(2,3)$-totally decomposing
configuration $\omega=\{L_1,\ldots,L_8\}$ of 8 tropical lines in
$\R^3$. We suppose that $L_8\gg L_7\gg\ldots\gg L_1$, and we denote by
$W_i$ the wall of $L_i$.
Recall that notations have been defined in
section \ref{d=2}.

\begin{lem}
If the triple $(k,A,B)$ is such that 
$\T\mathcal C(\omega)^{(k,A,B)}$ is non-empty, then it contains 
exactly $2^{m_{(k,A,B)}}$ tropical morphisms, all of them of
multiplicity 1.
\end{lem}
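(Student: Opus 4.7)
The plan is to invoke Proposition \ref{floor d=2 1}, which gives
\[
\sum_{f\in\T\mathcal C(\omega)^{(k,A,B)}}\mu(f)
 = 2^{m_{(k,A,B)}}\sum_{f'\in\T\mathcal C(\omega'^{(k,A,B)})}\mu(f'),
\]
and to combine it with a direct combinatorial count of the fibers of the map $\phi_{(k,A,B)}$. Together these will yield both the cardinality and the multiplicity assertions of the lemma.

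First I would analyse the structure of the projected configuration $\omega'^{(k,A,B)}=\{\widetilde L'_A,\widetilde L'_B,\widehat L'_k,L'_{k+1},\ldots,L'_8\}$ in $\R^2$. Since each $L_j$ is a tropical line in $\R^3$, every $\widehat L'_j=\pi(L_j)$ is a tropical line in $\R^2$ while every $L'_j=\pi(W_j)$ is a point. The non-emptiness of $\T\mathcal C(\omega)^{(k,A,B)}$, combined with the dimension count $\sum (l'-1)=5$ required for a finite enumeration of tropical conics in $\R^2$, forces $|A|,|B|\in\{1,2\}$. In every such case $\omega'^{(k,A,B)}$ consists of exactly $5$ point constraints (the $L'_j$ with $j>k$ together with those among $\widetilde L'_A,\widetilde L'_B$ of codimension $2$) and exactly $m_{(k,A,B)}$ codimension-$1$ constraints (namely $\widehat L'_k$ together with those among $\widetilde L'_A,\widetilde L'_B$ of codimension $1$); by Definition \ref{well ord}, these five points are in generic position in $\R^2$.

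Next I would prove that $\T\mathcal C(\omega'^{(k,A,B)})$ contains a single tropical conic $f'$ with $\mu(f')=1$. A tropical line in $\R^2$ has codimension $1$ and so imposes $l-1=0$ effective conditions on a tropical conic, since conic and line meet transversally in two points for generic positions; the enumeration therefore reduces to counting tropical conics through $5$ generic points in $\R^2$. This is the tropical content of the classical equality $N_{2,2}(2,2,2,2,2)=1$, so Theorem \ref{corr} together with the positivity of tropical multiplicities forces uniqueness and $\mu(f')=1$. The well-ordered hypothesis additionally guarantees that $f'$ meets each of the $m_{(k,A,B)}$ codimension-$1$ constraints of $\omega'^{(k,A,B)}$ transversally in two distinct interior points.

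Finally I would enumerate the fiber $\phi_{(k,A,B)}^{-1}(f')$: a lift of $f'$ to an element of $\T\mathcal C(\omega)^{(k,A,B)}$ is determined by choosing, independently for each of the $m_{(k,A,B)}$ codimension-$1$ constraints of $\omega'^{(k,A,B)}$, one of the two intersection points with $f'$ at which to attach the corresponding elevator of the three-dimensional conic, producing exactly $2^{m_{(k,A,B)}}$ combinatorially distinct preimages. Substituting $\mu(f')=1$ in the identity above gives $\sum_{f}\mu(f)=2^{m_{(k,A,B)}}$, and since tropical multiplicities are positive integers, each of the $2^{m_{(k,A,B)}}$ preimages must have multiplicity exactly $1$. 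The main obstacle is to exclude degeneracies that would collapse some of these lifts — coincident intersection points of $f'$ with a codimension-$1$ constraint, or two choices projecting to the same morphism — and this is precisely where the well-ordered refinement of the totally decomposing hypothesis, rather than merely the totally decomposing one, is essential.
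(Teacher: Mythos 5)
Your overall strategy is the same as the paper's: project to $\R^2$, observe that $\omega'^{(k,A,B)}$ consists of five points and $m_{(k,A,B)}$ tropical lines, use $N_{2,2}(2,2,2,2,2)=1$ to get a unique planar conic $f'$ of multiplicity $1$, and then combine the fiber count $|\phi_{(k,A,B)}^{-1}(f')|=2^{m_{(k,A,B)}}$ with Proposition \ref{floor d=2 1} to force every lift to have multiplicity $1$. All of that bookkeeping is correct. However, there is a genuine gap at the one non-trivial step, which you yourself flag as ``the main obstacle'' but never resolve: you assert that ``the well-ordered hypothesis additionally guarantees that $f'$ meets each of the $m_{(k,A,B)}$ codimension-$1$ constraints transversally in two distinct interior points,'' with no argument. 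This is exactly the claim the lemma hinges on: if some line in $\omega'^{(k,A,B)}$ met $f'(C)$ in a single point (with intersection multiplicity $2$), the fiber would have fewer than $2^{m_{(k,A,B)}}$ elements and some lift would have multiplicity $>1$, and the identity $\sum_f\mu(f)=2^{m_{(k,A,B)}}$ from Proposition \ref{floor d=2 1} would still hold. So the conclusion cannot be extracted from the multiplicity identity alone; the two-distinct-points statement must be proved geometrically.

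The paper's argument for this step is as follows. Non-emptiness forces $k\le 5$, so $L_6$, $L_7$, $L_8$ are vertical constraints and $w_6=\pi(W_6)$, $w_7=\pi(W_7)$, $w_8=\pi(W_8)$ are three of the five point constraints; by the well-ordered hypothesis they are the three \emph{highest} elements of $\omega'^{(k,A,B)}$, while the $m_{(k,A,B)}$ lines (which are among $\widetilde L'_A$, $\widetilde L'_B$, $\widehat L'_k$) are the lowest. Since $\omega'^{(k,A,B)}$ is $(2,2)$-decomposing, the position of $w_8$, $w_7$, $w_6$ pins down the floor structure of the unique planar conic: its two floors pass through $w_8$ and $w_6$ and its bounded elevator through $w_7$. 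Any line of $\omega'^{(k,A,B)}$ lies below $w_6$, hence below both floors, and therefore can only meet the conic along its two unbounded elevators --- giving two distinct intersection points. Your proof needs this (or an equivalent) argument to be complete; as written, the decisive use of the well-ordered hypothesis is only announced, not carried out.
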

\begin{proof}
Let us fix such a triple $(k,A,B)$.
It is easy to see that
the three higher lines $L_6$, $L_7$ and
$L_8$ are vertical constraints for elements of 
$\T\mathcal C(\omega)^{(k,A,B)}$ (see Figure \ref{n=3,d=2}).
 In particular, the configuration
$\omega'^{(k,A,B)}$ in $\R^2$ contains the 3 points $w_8=\pi(W_8)$,
$w_7=\pi(W_7)$, and $w_6=\pi(W_6)$. Since $\omega$ is a well-ordered
$(2,3)$-totally decomposing configuration, the configuration 
$\omega'^{(k,A,B)}$ is $(2,2)$-decomposing and
the points $w_8,w_7$, and $w_6$ are
its highest elements. Moreover, $N_{2,2}(2,2,2,2,2)=1$ so 
$\omega'^{(k,A,B)}$ is maximal.
Hence the set $\T \mathcal C(\omega'^{(k,A,B)})$
reduces to a unique plane tropical conic $f:C\to \R^2$ 
(passing through the three points $w_8$,
$w_7$, $w_6$, and two
other ones) which intersects the $m_{(k,A,B)}=1,\ 2$
or $3$ tropical lines in $\omega'^{(k,A,B)}$.

It remains  to show  that each of these $m_{(k,A,B)}$ tropical lines
intersects  the tropical conic
$f(C)$
 in two distinct points, which would imply the lemma by Proposition
\ref{floor d=2 1}. According to what we discussed above about the
configuration
$\omega'^{(k,A,B)}$, the tropical
 conic $f$ has two floors of degree 1 passing through the points $w_8$
 and $w_6$, while the bounded elevator of $f$ passes through $w_7$ 
(see figure \ref{plan_con}). Since any tropical line in 
$\omega'^{(k,A,B)}$ is lower than $w_6$, it must intersect $f(C)$ 
along its unbounded elevators. In particular, it intersects $f(C)$ in
two distinct points.
\end{proof}
 
\begin{figure}[h]
\input{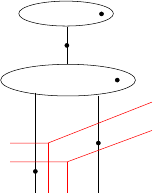_t}
\caption{Floor decomposition of a planar conic}\label{plan_con}
\end{figure}

It is immediate that the set $\T\mathcal C(\omega)^{(k_1,k_2,A,B,C_1,C_2,D)}$ is
either empty of composed of a unique tropical morphism of multiplicity
1 (see Figure \ref{n=3,d=2}). So the set $\T\mathcal C(\omega)$ is composed of tropical
morphisms of multiplicity 1. 
Hence according to 
Lemma \ref{real mod},  there exists a 
generic configuration of $8$ lines in $\RP^3$
such that exactly $92$ real conics intersect these 8 lines.

\subsection{General case}

Theorem \ref{main} is a direct consequence of 
Proposition \ref{conics}, Theorem \ref{corr}, and Lemma \ref{real mod}.
Next proposition relies on Proposition \ref{card red} which is proved
in next section.

\begin{prop}\label{conics}
Let $\omega$ be a well-ordered $(2,n)$-totally decomposing
configuration 
of complete tropical linear spaces in $\R^n$ subject to
equality (\ref{RR}). Then $\omega$ is maximal.
\end{prop}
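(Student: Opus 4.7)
The plan is to prove Proposition \ref{conics} by induction on $n$, along the lines of Lemma \ref{lines} but with the added complication of tropical conics that may have either one floor of degree 2 or two floors of degree 1. The base case $n=2$ is trivial: since equality (\ref{RR}) with $d=2$ forces $\gamma=5$ and $l_j=2$ for all $j$, and $N_{2,2}(2,2,2,2,2)=1$, any generic configuration of five points in $\R^2$ is automatically maximal.

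For the inductive step, by Proposition \ref{floor horizontal} and the discussion of section \ref{d=2}, the set $\T\mathcal C(\omega)$ partitions into the one-floor pieces $\T\mathcal C(\omega)^{(k,A,B)}$ and the two-floor pieces $\T\mathcal C(\omega)^{(k_1,k_2,A,B,C_1,C_2,D)}$. By Theorem \ref{corr}, maximality of $\omega$ is equivalent to every $f\in\T\mathcal C(\omega)$ having multiplicity $1$. I would establish this separately on each family.

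For a one-floor piece $\T\mathcal C(\omega)^{(k,A,B)}$, I would first observe that the projected configuration $\omega'^{(k,A,B)}$ is again a well-ordered $(2,n-1)$-totally decomposing configuration: this is immediate from Definition \ref{well ord} together with the remark following it, since $\omega'^{(k,A,B)}$ is obtained by replacing a family of elements by their intersection. The induction hypothesis then gives $\mu(f')=1$ for every $f'\in\T\mathcal C(\omega'^{(k,A,B)})$, and combining with Proposition \ref{floor d=2 1} yields
\begin{equation*}
\sum_{f\in\T\mathcal C(\omega)^{(k,A,B)}}\mu(f)=2^{m_{(k,A,B)}}\,|\T\mathcal C(\omega'^{(k,A,B)})|.
\end{equation*}
The crux of the argument, and what I expect to be the main obstacle, is to show that the lifting map $\phi_{(k,A,B)}$ is surjective with each fiber of size exactly $2^{m_{(k,A,B)}}$; together with the displayed equation this forces every $\mu(f)$ on the left to equal $1$. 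This amounts to verifying that each codimension-$1$ space among $\{\widetilde L'_A,\widetilde L'_B,\widehat L'_k\}$ meets $f'(C')$ in two distinct points. The idea is to imitate the argument of section \ref{conics8lines}: the well-ordered hypothesis places these codimension-$1$ hyperplanes higher than all other constraints of $\omega'^{(k,A,B)}$, so the planar conic $f'(C')$ has already split at that height into two unbounded elevators of degree $1$, each of which meets the hyperplane in a distinct point.

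For the two-floor pieces I would invoke Proposition \ref{card red}, announced as the technical input to be proved in the following section. It plays the role for reducible tropical conics that Lemma \ref{lines} plays for lines: under the totally decomposing hypothesis on $\omega$, the cardinality $|\T\mathcal C(\omega)^{(k_1,k_2,A,B,C_1,C_2,D)}|$ equals the product of $N_{1,n-1}$-numbers appearing in Proposition \ref{floor d=2 2}, which forces every multiplicity in this piece to equal $1$. Summing over the one-floor and two-floor pieces then gives $\sum_{f\in\T\mathcal C(\omega)}\mu(f)=|\T\mathcal C(\omega)|$, which by Theorem \ref{corr} is exactly the maximality of $\omega$.
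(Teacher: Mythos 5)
Your overall architecture matches the paper's proof exactly: induction on $n$, splitting $\T\mathcal C(\omega)$ into one-floor and two-floor pieces, handling the two-floor pieces by combining Proposition \ref{card red} with the bound coming from Proposition \ref{floor d=2 2} to squeeze all multiplicities to $1$, and reducing the one-floor pieces to the claim that each hyperplane among $\{\widetilde L'_A,\widetilde L'_B,\widehat L'_k\}$ meets every conic of $\T\mathcal C(\omega'^{(k,A,B)})$ in two distinct points.

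However, your justification of that last claim is backwards. Since $A\sqcup B=\{1,\ldots,k-1\}$ and $L_1,\ldots,L_k$ are the \emph{lowest} elements of $\omega$, the well-ordered hypothesis makes $\widetilde L'_A$, $\widetilde L'_B$ and $\widehat L'_k$ the three \emph{lowest} elements of $\omega'^{(k,A,B)}$, not the highest (compare section \ref{conics8lines}, where the tropical lines of $\omega'^{(k,A,B)}$ are lower than the points $w_6,w_7,w_8$). This matters because the part of a conic consisting of its two unbounded elevators is the part \emph{below} its lowest floor (unbounded elevators point in the direction $(0,\ldots,0,-1)$); above the floors the conic consists of ends in direction $(1,\ldots,1)$, which belong to floors, so a hyperplane lying higher than all other constraints would not meet the conic along elevators as you assert. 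You are also missing the step that forces the hyperplanes to lie below all the floors: since a hyperplane imposes no condition, every floor and every unbounded elevator of a conic in $\T\mathcal C(\omega'^{(k,A,B)})$ must meet at least one non-hyperplane constraint, and all of those are higher than the three candidate hyperplanes; only then can one conclude that a hyperplane meets the conic strictly below its lowest floor, hence along its two unbounded elevators, in two distinct points. With the ordering corrected and this observation added, your argument coincides with the paper's.
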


\begin{proof}
Our goal is to prove that $\sum_{f\in \T\mathcal
  C(\omega)}\mu(f)=|\T\mathcal C(\omega)|$. This is obviously true
when $n=2$ since the left hand side is $1$. Suppose that $n\geq 3$ and
the equality is true in lower dimensions. Since $\omega$ is
$(2,n)$-decomposing, we have $$\sum_{f\in \T\mathcal C(\omega)}\mu(f)
=\sum_{(k,A,B)} \left(\sum_{f\in \T\mathcal
  C(\omega)^{(k,A,B)}}\mu(f)\right)+\sum_{(k_1,k_2,A,B,C_1,C_2,D)}
\left(\sum_{f\in \T\mathcal
  C(\omega)^{(k_1,k_2,A,B,C_1,C_2,D)}}\mu(f)\right)$$ 
where the sums
are taken following section \ref{d=2}.
\vspace{1ex}
Suppose that $\T\mathcal C (\omega)^{(k,A,B)}$ is non-empty.
By
 induction, 
$\omega'^{(k,A,B)}$ is a well-ordered $(2,n-1)$-totally decomposing
configuration and so is
maximal.
Moreover,
the map $\phi_{(k,A,B)}:\T\mathcal C (\omega)^{(k,A,B)}\to
\T\mathcal C(\omega'^{(k,A,B)})$  is $2^{m(k,A,B)}$ to $1$, counted with
multiplicities. 
Hence, it remains  to show  that any
 conic in $\T\mathcal C(\omega'^{(k,A,B)})$ 
intersects each of the $m(k,A,B)$ hyperplanes
of $\omega'^{(k,A,B)}$ in 2 distinct points, which would imply,
by Proposition
\ref{floor d=2 1}, that
$$|\T\mathcal
C(\omega)^{(k,A,B)}|=2^{m(k,A,B)}|\T\mathcal C(\omega'^{(k,A,B)})|=
2^{m(k,A,B)}\sum_{f'\in \T\mathcal
  C(\omega'^{(k,A,B)})}\mu(f') =\sum_{f\in \T\mathcal
  C(\omega)^{(k,A,B)}}\mu(f).$$
 
We have $\omega'^{(k,A,B)}=\{\widetilde L'_A,\widetilde L'_B,\widehat
L'_k, L'_{k+1},\dots,L'_{\gamma}\}$, 
so a tropical hyperplane $H$ in  $\omega'^{(k,A,B)}$ is either $\widehat L'_k$, $\widetilde L'_A$ or
$\widetilde L'_B$. 
Since $\omega'^{(k,A,B)}$ is a well-ordered configuration,  
$\widehat L'_k$, $\widetilde L'_A$ and $\widetilde L'_B$ are its 3 lowest elements. In particular,
$H$ is higher than at most 2 other elements of $\omega'^{(k,A,B)}$.
 
The configuration $\omega'^{(k,A,B)}$ is $(2,n-1)$-decomposing, so 
 any unbounded elevator and any floor of a conic in $\T\mathcal C
(\omega'^{(k,A,B)})$ has to intersect at least one element of 
 $\omega'^{(k,A,B)}$ which is not a hyperplane. In particular,
a hyperplane in $\omega'^{(k,A,B)}$  intersects
 such a conic strictly below the lowest floor, that is along its two
unbounded elevators in two distinct points (see figure
\ref{One_two_floor_dec}).

\begin{figure}[h]
\includegraphics[height=1.5cm]{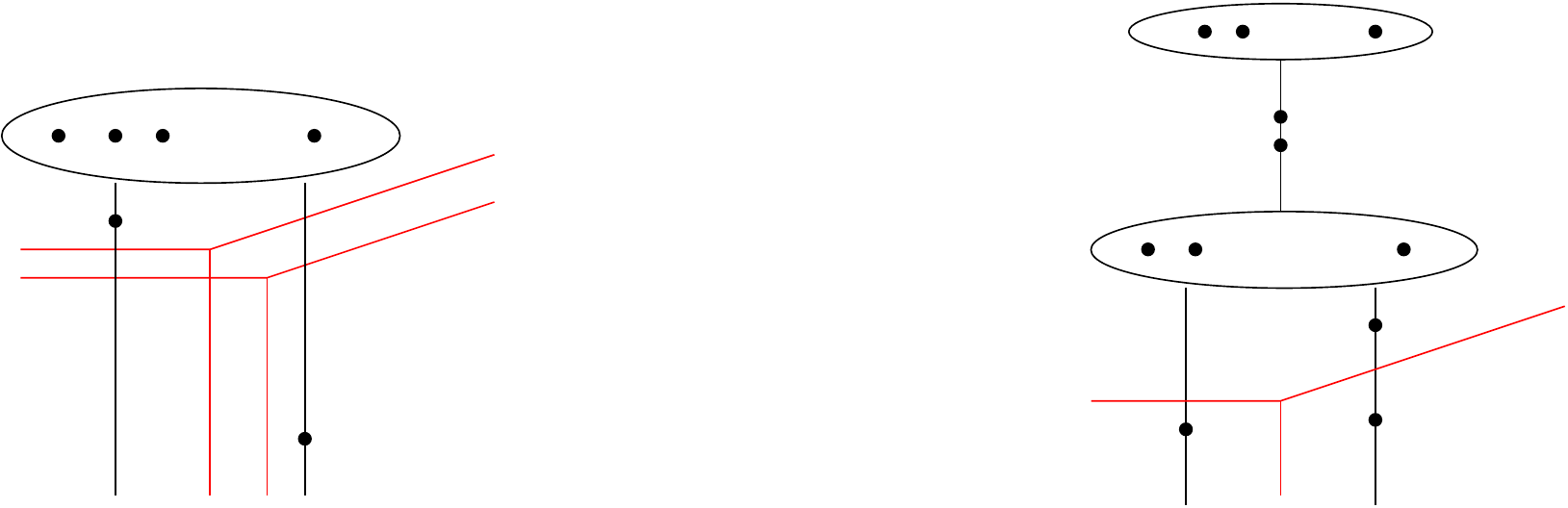}
\caption{Floor decompositions in $\T\mathcal C(\omega'^{(k,A,B)})$ and hyperplanes.}\label{One_two_floor_dec}
\end{figure}

\vspace{1ex}

Suppose that $\T\mathcal C(\omega)^{(k_1,k_2,A,B,C_1,C_2,D)}$ is non-empty.
According to section \ref{d=2}, we have
\begin{equation}\label{ineq}
\begin{array}{lll}
|\T \mathcal C_{red}(\omega^{(k_1,k_2,A,B,C_1,C_2,D)})|&=&
|\T\mathcal C(\omega)^{(k_1,k_2,A,B,C_1,C_2,D)}|
\\ \\&\le& 
\sum_{f\in \T\mathcal C(\omega)^{(k_1,k_2,A,B,C_1,C_2,D)}}\mu(f)
\\ \\&\le&
N_{1,n-1}(l'^1_{i_1},\ldots, l'^1_{i_{\gamma_1}},l'_A,l'_B, l'_{k_1}, 
 l'^1_0)
N_{1,n-1}(l'^2_{j_1},\ldots, l'^2_{j_{\gamma_2}}, l'_{k_2}, 
 l'^2_0).
\end{array}
\end{equation}

According to Proposition \ref{card red} (see next section) 
applied to
$\omega^{(k_1,k_2,A,B,C_1,C_2,D)}=(\widetilde L'_D,
\omega'^1,\omega'^2) $, we get that
$$|\T \mathcal C_{red}(\omega^{(k_1,k_2,A,B,C_1,C_2,D)})|=
N_{1,n-1}(l'^1_{i_1},\ldots, l'^1_{i_{\gamma_1}},l'_A,l'_B, l'_{k_1}, 
 l'^1_0)
N_{1,n-1}(l'^2_{j_1},\ldots, l'^2_{j_{\gamma_2}}, l'_{k_2}, 
 l'^2_0). $$
Hence all inequalities in (\ref{ineq}) are in fact equalities, and we
have
$$|\T\mathcal C(\omega)^{(k_1,k_2,A,B,C_1,C_2,D)}|=
\sum_{f\in \T\mathcal C(\omega)^{(k_1,k_2,A,B,C_1,C_2,D)}}\mu(f)$$ which achieves to prove the proposition.
\end{proof}

\subsection{Reducible conics through a well-ordered totally
  decomposing configuration}
 A reducible tropical morphism  $f:C\to \R^n$ of degree 2 has either
two floors of degree 1, or a unique (reducible) floor of degree
2. 
The proof of Proposition \ref{floor horizontal} only relies on the 
 finiteness of  the set 
$\T \mathcal C(\omega)$ and on vectors $u_{f,e}$ for
 $e\in\Ed^\infty(C)$. In particular Lemma \ref{generic red} implies
 that
Proposition \ref{floor horizontal} 
still holds for elements of 
$\T \mathcal C_{red}(L_0,\omega^1,\omega^2)$.
In this section we
 compute the numbers of reducible tropical morphisms of degree 2
 passing through a particular configuration of complete tropical
 linear spaces in $\R^n$.

\vspace{1ex}
Let  
$l_0\ge 0$ and $l^1_1$, $\ldots$, $l^1_{\gamma_1},l^2_1$, $\ldots$,
$l^2_{\gamma_2}\ge 1$ be some integers such that 
$$l_0+\sum_{i=1,2}\quad \sum_{j=1}^{\gamma_i}(l^i_j-1)=3n-2.$$
We choose $L_0$ a complete linear space in $\R^n$
of codimension $l_0$ and
two configurations $\omega^1$ and $\omega^2$ of 
complete  tropical linear spaces in $\R^n$ such that 
$\omega^i=\{L^i_1,\ldots,L^i_{\gamma_i}\}$ with
 $\codim L^i_j=l^i_j$ and $L^i_j\gg L^i_{j+1}$.
\begin{defn}
The configuration $(L_0, \omega^1, \omega^2)$ is said to be
separated if the configuration
$\{L_0\}\cup\omega^1\cup \omega^2$   is a well-ordered
$(2,n)$-totally decomposing configuration,  and if $L_0$ and elements of
$\omega^1$  are below elements of $\omega^2$.
\end{defn}
 Note that we do not make any
assumption
 about the mutual  position of $L_0$ and elements of
$\omega^1$, and that the projection to $\R^{n-1}$ of a separated
 configuration in $\R^n$ is still separated.
Given a separated configuration $\{L_0, \omega^1, \omega^2\}$ in $
\R^n$, 
we denote by 
$\T N^{red}_n(l_0,\{l^1_1,\ldots,l^1_{\gamma_1}\},\{l^2_1,\ldots,l^2_{\gamma_2}\})$
the cardinal of $\T \mathcal C_{red}(L_0,\omega^1,\omega^2)$. 
We will see in Proposition \ref{card red} 
that this cardinal does not depend on $L_0,\ \omega^1$, and
$\omega^2$ as long as $\{L_0, \omega^1, \omega^2\}$ is separated.

\begin{prop}\label{card red}
For any generic separated configuration $\{L_0, \omega^1, \omega^2\}$ in 
$\R^n$, we have
$$\T N^{red}_n(l_0,\{l^1_1,\ldots,l^1_{\gamma_1}\},\{l^2_1,\ldots,l^2_{\gamma_2}\})=
\prod_{i=1,2}N_{1,n}\left( 2n-1
-\sum_{j=1}^{\gamma_i}(l^i_j-1),l^i_1,\ldots,l^i_{\gamma_i}\right)
.$$
\end{prop}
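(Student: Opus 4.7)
The plan is to prove the proposition by induction on $n$, extending the floor-decomposition arguments of Sections \ref{d=1}--\ref{d=2} to the reducible setting. The base case $n = 2$ is immediate: the numerical condition $l_0 + \sum_{i,j}(l^i_j - 1) = 4$, combined with positivity and $\gamma_i \geq 1$, forces $\gamma_1 = \gamma_2 = 1$, $l^1_1 = l^2_1 = l_0 = 2$, so every constraint is a point; the unique tropical line through two points yields one reducible conic, matching $N_{1,2}(2,2)^2 = 1$ on the right-hand side.

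For the inductive step, consider any $f = f_1 \cup_p f_2 \in \T\mathcal{C}_{red}(L_0, \omega^1, \omega^2)$. By Proposition \ref{floor horizontal} extended to reducible morphisms via Lemma \ref{generic red}, each floor of $f$ meets exactly one horizontal constraint from $\{L_0\} \cup \omega^1 \cup \omega^2$. The separated hypothesis, placing $\omega^2$ strictly above $L_0$ and $\omega^1$, forces $p \in L_0$ to lie on the elevator of $f_2$: indeed $f_2$'s floor sits at the level of some $L^2_{k_2} \in \omega^2$, strictly above $p$, so since $p \in f_2$ must be below the vertex of $f_2$'s line, it can only lie on the downward elevator. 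Consequently $f$ always has exactly two floors $\F_1,\F_2$, with $\F_2$ meeting some $L^2_{k_2}$ and $\F_1$ meeting some $L^\star \in \omega^1 \cup \{L_0\}$. I classify the elements of $\T\mathcal{C}_{red}$ according to the pair $(k_2, L^\star)$.

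For each such pair, I apply the bijection of Proposition \ref{floor d=1} to each $f_i$ separately: $\pi \circ f_i$ corresponds to a tropical line in $\R^{n-1}$ through a specific projected configuration, in which the constraints strictly below the horizontal one are collapsed to the intersection of their projections; crucially, $\pi(L_0)$ appears among these constraints on the $f_2$-side, encoding the fact that the elevator of $f_2$ must pass through the node $p \in L_0$, and an analogous encoding is placed on the $f_1$-side. The well-ordered totally decomposing hypothesis ensures that both projected configurations are generic in $\R^{n-1}$, and by Theorem \ref{corr} applied in degree 1, the number of such projected lines equals the corresponding value of $N_{1,n-1}$. Summing the resulting products over all choices of $(k_2, L^\star)$ and invoking Corollary \ref{rec line} to identify each sum with $N_{1,n}(l^i_0, l^i_1, \ldots, l^i_{\gamma_i})$, one recovers the claimed product formula.

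The main obstacle is the delicate bookkeeping around the shared node $p$: the horizontal position $\pi(p)$ must be consistently determined by both projected lines, and a pair of projected lines in $\R^{n-1}$ must lift uniquely to a reducible conic in $\R^n$. This requires a case analysis depending on whether $L^\star = L_0$ (so $p$ lies on $\F_1$) or $L^\star \in \omega^1$ (so $p$ lies on an elevator of $f_1$), and on the mutual order of $L_0$ and the elements of $\omega^1$. The strict separation between $\omega^2$ and $\{L_0\} \cup \omega^1$, combined with the well-ordered totally decomposing structure of $\omega^1$ and $L_0$, is exactly what guarantees that these cases assemble correctly and that the lift from $\R^{n-1}$ back to $\R^n$ is unique, so the partial counts genuinely multiply to give the product formula.
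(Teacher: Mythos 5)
Your overall skeleton is the right one and matches the paper's: induction on $n$, the observation that the separation of $\omega^2$ from $\{L_0\}\cup\omega^1$ forces two floors of degree $1$ with the node $p$ on the elevator of $C_2$, a partition according to which horizontal constraint each floor meets, and a final appeal to Corollary \ref{rec line} to recognize the resulting sums as values of $N_{1,n}$. But two steps in the middle are genuinely missing or wrong. First, your case analysis for the position of $p$ relative to $C_1$ is incomplete. You allow only ``$L^\star=L_0$, so $p$ lies on $\F_1$'' and ``$L^\star\in\omega^1$, so $p$ lies on an elevator of $f_1$''. The second implication is false: when the horizontal constraint of $\F_1$ is some $L^1_{k_1}$ with $L_0$ \emph{above} $L^1_{k_1}$, the node can (and does) sit on the floor $\F_1$, meeting $L_0$ through its wall $W_0$. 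The paper therefore partitions $\T\mathcal C_{red}(L_0,\omega^1,\omega^2)$ into \emph{three} families ($p$ on both elevators; $p$ on $\F_1$ with $L_0$ the horizontal constraint of $\F_1$; $p$ on $\F_1$ through $W_0$ with some $L^1_{k_1}$ the horizontal constraint), and these project to node constraints built from $\pi(L_0)$ in the first two cases but from $\pi(W_0)$ in the third, hence to recursions with different codimension shifts. All three groups of terms are needed for the sum $B$ to telescope, via Corollary \ref{rec line}, into $N_{1,n}\bigl(2n-1-\sum_j(l^1_j-1),l^1_1,\ldots,l^1_{\gamma_1}\bigr)$; with your two cases the count comes out short whenever $L_0$ is not below all of $\omega^1$ (which is exactly the situation arising in the application to Proposition \ref{conics}, where $L_0=\widetilde L'_D$ sits between the two floors).

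Second, the step you yourself flag as ``the main obstacle'' is the actual content of the proof, and your proposed resolution does not work as stated. You cannot apply Proposition \ref{floor d=1} to $f_1$ and $f_2$ separately with ``an encoding of $L_0$ on each side'': the condition is that the two lines meet at a \emph{common} point lying on $L_0$, which is not a product of independent incidence conditions on $C_1$ and on $C_2$ (a codimension count already shows that placing a copy of a constraint on each side imposes the wrong number of conditions). The paper's mechanism is to keep the reducible structure under projection: each of the three families is in bijection with $\T\mathcal C_{red}$ of a projected \emph{separated} configuration in $\R^{n-1}$, where the new node constraint is the intersection of $\pi(L_0)$ (or $\pi(W_0)$) with the projections of all constraints met by the elevators below the floors; this yields a recursion expressing $\T N^{red}_n$ as a sum of values of $\T N^{red}_{n-1}$, and the product structure appears only after invoking the induction hypothesis and then Corollary \ref{rec line} on each factor. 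Without this (or an equivalent device, e.g.\ genuine tropical intersection theory in the spirit of Lemma \ref{complex red}), the ``summing the resulting products'' step is unjustified. Finally, a small point: your base case is wrong as stated --- for $n=2$ the numerical condition does not force $\gamma_1=\gamma_2=1$ and $l_0=l^i_1=2$, since $l_0$ may be $0$ or $1$ and codimension-$1$ constraints (tropical lines, which impose no condition) may occur; the case is still easy, but not for the reason you give.
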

\begin{proof}
The case $n=2$ is straightforward. Let us suppose now that $n\ge 3$
and that the Proposition is true in lower dimensions.

Let $f:C_1\cup_p C_2\to\R^n$ be an element of $\T \mathcal
C_{red}(L_0,\omega^1,\omega^2)$. 
An elevator of $f$ has to meet at least a constraint, and 
elements of $\omega^2$ are above $L_0$ and elements of $\omega^1$, so 
 $f$
 has two floors of degree 1; moreover the node of $C$ is
either on both elevators of $C_1$ and $C_2$, or on the floor of $C_1$
and the elevator of $C_2$ (see figure \ref{red_dec}).
\begin{figure}[h]
\input{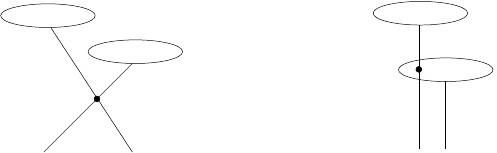_t}
\caption{}\label{red_dec}
\end{figure}

We define $k_0$ as the smallest integer such that $L^1_{k_0}$ is
higher than
$L_0$ if such an element of $\omega^1$ exists, and by $k_0=0$
otherwise. We denote by
$\mathcal F_i$ the floor of $C_i$, $i=1,2$.

We consider the partition 
$$\bigsqcup_{\begin{array}{l}k_0\le k_1\le \gamma_1\\1\le k_2\le \gamma_2\end{array}
}\mathcal C_1^{k_1,k_2}\ 
\bigsqcup_{\begin{array}{l}1\le k_2\le \gamma_2\end{array}}
\mathcal C_2^{k_2}
\ \bigsqcup_{\begin{array}{l}1\le k_1\le k_0-1\\1\le k_2\le \gamma_2\end{array}
}
\mathcal C_3^{k_1,k_2} $$
of the set  
$\T \mathcal C_{red}(L_0,\omega^1,\omega^2)$ where\\
\begin{itemize}
\item $\mathcal C_1^{k_1,k_2}$ is the set of all elements 
$f_1\cup_pf_2:C_1\cup_p C_2\to\R^n$ in 
$\T \mathcal C_{red}(L_0,\omega^1,\omega^2)$
such  that $p$ is on both elevators of
  $f_1$ and $f_2$, and the floor   $\F_i$ meets the horizontal
constraint $L^i_{k_i}$; note that 
$\mathcal C_1^{k_1,k_2}=\emptyset$ if $k_0=0$;

\item $\mathcal C_2^{k_2}$ is the set of all elements 
$f_1\cup_pf_2:C_1\cup_p C_2\to\R^n$ in 
$\T \mathcal C_{red}(L_0,\omega^1,\omega^2)$
such that  $p$ is on  the floor $\F_1$,  $ L_0$ is the horizontal
constraint of $\F_1$, 
and the floor $\F_2$ meets the horizontal
constraint $L^2_{k_2}$; note that 
$\mathcal C_2^{k_2}=\emptyset$ if $k_0\le 1$;

\item  $\mathcal C_3^{k_1,k_2}$ is the set of all elements 
$f_1\cup_pf_2:C_1\cup_p C_2\to\R^n$ in 
$\T \mathcal C_{red}(L_0,\omega^1,\omega^2)$
such that $p$ is on the floor 
  $\F_1$, and the floor   $\F_i$ meets the horizontal
constraint $L^i_{k_i}$.
\end{itemize}
We denote by $W^i_j$ (resp. $W_0$)
the wall of the constraint $L^i_j$ (resp. $L_0$). We consider the
following complete tropical linear spaces in $\R^{n-1}$
$$L'^i_j=\pi(W^i_j),\quad  \quad 
 \widehat L'^i_j=\pi(L^i_j), \quad \quad 
 \widetilde L'^{k_1,k_2}_0=\pi(L_0)\bigcap_{i=1,2}\ \ \bigcap_{1\le j\le
   k_i-1} \pi(L^i_j),$$
$$\widetilde 
L'^1_{k}=\bigcap_{1\le j\le k-1}\pi(L^1_j),\quad \quad   
\widetilde L'^{k_2}_0=\pi(L_0)\bigcap_{1\le j\le
   k_2-1} \pi(L^2_j), \quad \quad 
\mathcal L'^{k_2}_0=\pi(W_0)\bigcap_{1\le j\le
   k_2-1} \pi(L^2_j)$$
and the following configurations
$$\omega_1^{i,k_i}=\{\widehat L'^i_{k_i},
L'^i_{k_i+1},\ldots,L'^i_{\gamma_i}\} \quad \text{for}\ i=1,2, \quad
\omega_2^{1}=\{\widetilde L'^1_{k_0},L'^1_{k_0},\ldots,L'^1_{\gamma_1}\},\quad  
\omega_3^{1,k_1}=\{\widehat L'^1_{k_1}, \widetilde L'^1_{k_1}, L'^1_{k_1+1},\ldots,L'^1_{\gamma_1}\}.$$

\vspace{1ex}
Given an element $f$ of $\mathcal C_1^{k_1,k_2}$, the tropical
morphism $\pi\circ f$ induces an element of 
$\T \mathcal C_{red}(\widetilde
L'^{k_1,k_2}_0,\omega_1^{1,k_1},\omega_1^{2,k_2})$. 
Conversely any
element $f'_1\cup_{p'} f'_2:C'_1\cup_{p'} C'_2\to\R^{n-1}$
 of $\T \mathcal C_{red}(\widetilde
L'^{k_1,k_2}_0,\omega_1^{1,k_1},\omega_1^{2,k_2})$ has a unique lift
  $f_1\cup_{p} f_2:C_1\cup_{p} C_2\to\R^{n}$ in $\mathcal C_1^{k_1,k_2}$; the
elevators of $f_1$ and $f_2$ correspond to $p'$, and the
node $p$ is at the unique intersection point of the elevator of
$C_i$ with $L_0$. Hence, we get that the total number of tropical
morphisms $f$ in $\mathcal C_1^{k_1,k_2}$ is
$$
\T N^{red}_{n-1}\left(l_0-1+
\sum_{i=1,2}\sum_{j=1}^{k_i-1}(l^i_j-1),\ 
\{l^1_{k_1}-1,l^1_{k_1+1},\ldots,l^1_{\gamma_1}\}, \
\{l^2_{k_2}-1,l^2_{k_2+1},\ldots,l^2_{\gamma_2}\}
\right).$$

\vspace{1ex}
Given an element $f$ of $\mathcal C_2^{k_2}$, the tropical
morphism $\pi\circ f$ induces an element of 
$\T \mathcal C_{red}(\widetilde
L'^{k_2}_0,\omega_2^{1},\omega_1^{2,k_2})$. 
Conversely any
element $f'_1\cup_{p'} f'_2:C'_1\cup_{p'} C'_2\to\R^{n-1}$
 of $\T \mathcal C_{red}(\widetilde
L'^{k_2}_0,\omega_2^{1},\omega_1^{2,k_2})$ has a unique lift
  $f_1\cup_{p} f_2:C_1\cup_{p} C_2\to\R^{n}$ in $\mathcal C_2^{k_2}$;
 the
elevator of $f_2$ corresponds to the node  $p'$,  the
elevator of $f_1$ corresponds to the unique intersection point of
$C'_1$ and  $\widetilde L'^1_{k_0} $, and the node $p$
 corresponds to the unique intersection point of the elevator of
$f_2$ and $L_0$.
 Hence, we get that the total number of  tropical
morphisms $f$ in  $\mathcal C_2^{k_2}$ is
$$
 \T N^{red}_{n-1}\left(l_0-1+
\sum_{j=1}^{k_2-1}(l^2_j-1),\ 
\{ \sum_{j=1}^{k_0-1}(l^1_j-1),l^1_{k_0},\ldots,l^1_{\gamma_1}\},
\ \{l^2_{k_2}-1,l^2_{k_2+1},\ldots,l^2_{\gamma_2}\}
\right).$$

\vspace{1ex}
Given an element $f$ of $\mathcal C_3^{k_1,k_2}$, the tropical
morphism $\pi\circ f$ induces an element of 
$\T \mathcal C_{red}(\mathcal
L'^{k_2}_0,\omega_3^{1,k_1},\omega_1^{2,k_2})$. 
Conversely any
element $f'_1\cup_{p'} f'_2:C'_1\cup_{p'} C'_2\to\R^{n-1}$
 of $\T \mathcal C_{red}(\mathcal
L'^{k_2}_0,\omega_3^{1,k_1},\omega_1^{2,k_2})$ has a unique lift
  $f_1\cup_{p} f_2:C_1\cup_{p} C_2\to\R^{n}$ in $\mathcal
C_3^{k_1,k_2}$.
 Hence, we get that the total number of tropical
morphisms $f$ in $\mathcal C_3^{k_1,k_2}$  is
$$\T N^{red}_{n-1}\left(l_0+
\sum_{j=1}^{k_2-1}(l^2_j-1),\ 
\{\sum_{j=1}^{k_1-1}(l^1_j-1),l^1_{k_1}-1,l^1_{k_1+1},\ldots,l^1_{\gamma_1}\},
\ \{l^2_{k_2}-1,l^2_{k_2+1},\ldots,l^2_{\gamma_2}\}
\right).$$

\vspace{2ex}
Altogether with the induction hypothesis, we get that
$$\T N^{red}_n(l_0,\{l^1_1,\ldots,l^1_{\gamma_1}\},\{l^2_1,\ldots,l^2_{\gamma_2}\})=
AB$$
where
$$A=\sum_{k_2=1}^{\gamma_2}
N_{1,n-1}(2n-2 -\sum_{j=k_2}^{\gamma_2}(l^2_j-1), \ 
l^2_{k_2}-1,l^2_{k_2+1},\ldots,l^2_{\gamma_2})$$
and 
$$\begin{array}{lll}
B&=&\sum_{k_1=k_0}^{\gamma_1}
N_{1,n-1}(2n-2 -\sum_{j=k_1}^{\gamma_1}(l^1_j-1),\ 
l^1_{k_1}-1,l^1_{k_1+1},\ldots,l^1_{\gamma_1}) 
\\ \\ && + 
N_{1,n-1}(2n-2- \sum_{j=1}^{\gamma_1}(l^1_j-1) ,\
\sum_{j=1}^{k_0-1}(l^1_j-1),\
l^1_{k_0},\ldots,l^1_{\gamma_1}) 
\\ \\ && + 
\sum_{k_1=1}^{k_0-1}
N_{1,n-1}(2n-1 -\sum_{j=1}^{\gamma_1}(l^1_j-1),\
\sum_{j=1}^{k_1-1}(l^1_j-1),\
l^1_{k_1}-1,l^1_{k_1+1},\ldots,l^1_{\gamma_1})) 
\end{array}$$

\vspace{1ex}
Now it follows from Corollary \ref{rec line} that
$$A= N_{1,n}( 2n-1- \sum_{j=1}^{\gamma_2}(l^2_j-1),l^2_1,\ldots, l^2_{\gamma_2})
$$
and that
$$B=N_{1,n}(2n-1
-\sum_{j=1}^{\gamma_1}(l^1_j-1),l^1_1,\ldots,l^1_{\gamma_1} ) $$
so the Proposition is proved.
\end{proof}

\small

\def\rightmark{\em Bibliography}

\bibliographystyle{alpha}

\bibliography{../Biblio}

\end{document}